\newtheorem{theorem}{Teorema}
\newtheorem{corollary}[theorem]{Corol\'{a}rio}
\newtheorem{definition}[theorem]{Defini\c{c}\~{a}o}
\newtheorem{lemma}[theorem]{Lema}
\theoremstyle{definition}
\newtheorem{example}[theorem]{Exemplo}
\newcommand{\Index}[1]{#1\index{#1}}
\begin{document}

\selectlanguage{portuges}

\title{Teorema de Noether no C\'{a}lculo das Varia\c{c}\~{o}es Estoc\'{a}stico}

\author{Adilson C. M. Barros$^1$\\ \texttt{adilson.barros@docente.unicv.edu.cv}
\and Delfim F. M. Torres$^2$\\ \texttt{delfim@ua.pt}}

\date{$^1$Departamento de Ci\^{e}ncia e Tecnologia, Universidade de Cabo Verde\\
Praia, Ilha de Santiago, Cabo Verde\\[0.3cm]
$^2$Centro de Investiga\c{c}\~{a}o e Desenvolvimento em Matem\'{a}tica e Aplica\c{c}\~{o}es\\
Departamento de Matem\'{a}tica, Universidade de Aveiro, 3810-193 Aveiro, Portugal}

\maketitle


\begin{abstract}
Come\c{c}amos por apresentar os problemas cl\'{a}ssicos
do c\'{a}lculo das varia\c{c}\~{o}es determin\'{\i}stico, dando \^{e}nfase \`{a}
condi\c{c}\~{a}o necess\'{a}ria de otimalidade de Euler--Lagrange e
Teorema de Noether. Como exemplos de aplica\c{c}\~{a}o,
obtemos as leis de conserva\c{c}\~{a}o da quantidade de movimento e energia
da mec\^{a}nica, v\'{a}lidas ao longo das extremais de Euler--Lagrange.
Introduzimos depois o c\'{a}lculo das varia\c{c}\~{o}es estoc\'{a}stico e demonstramos
um teorema de Noether estoc\'{a}stico obtido recentemente por Cresson.
Terminamos apontando um problema interessante em aberto.

\medskip

\noindent \textbf{Palavras Chave}: c\'{a}lculo das varia\c{c}\~{o}es, invari\^{a}ncia,
teorema de Noether, leis de conserva\c{c}\~{a}o, c\'{a}lculo das varia\c{c}\~{o}es estoc\'{a}stico,
teorema de Noether estoc\'{a}stico.
\end{abstract}


\selectlanguage{english}

\begin{abstract}
We begin by presenting the classical deterministic problems
of the calculus of variations, with emphasis on the necessary
optimality conditions of Euler--Lagrange and the Noether theorem.
As examples of application, we obtain the conservation laws
of momentum and energy from mechanics, valid along the Euler--Lagrange extremals.
We then introduce the stochastic calculus of variations, proving
a recent stochastic Noether-type theorem obtained by Cresson.
We end by pointing out an interesting open problem.

\medskip

\noindent \textbf{Keywords}: calculus of variations, invariance,
Noether's theorem, conservations laws, stochastic calculus of variations,
stochastic Noether's theorem.

\medskip

\noindent \textbf{2010 Mathematics Subject Classification}: 49K05, 49S05, 37H10, 60G07.

\end{abstract}

\selectlanguage{portuges}


\section{Introdu\c{c}\~{a}o}

Um dos primeiros problemas do c\'{a}lculo das varia\c{c}\~{o}es
foi colocado por Galileu em 1630. Consistia
em determinar a trajet\'{o}ria \'{o}tima que
minimizasse o tempo que uma part\'{\i}cula demora a percorrer dois pontos dados,
por a\c{c}\~{a}o exclusiva da gravidade. Algumas d\'{e}cadas mais tarde,
o problema de Galileu viria a ser resolvido pelos irm\~{a}os Bernoulli, Newton, Leibniz,
Euler e Lagrange, quase imediatamente a seguir \`{a} inven\c{c}\~{a}o do c\'{a}lculo diferencial
e integral por Newton e Leibniz.

O problema central do c\'{a}lculo das varia\c{c}\~{o}es consiste
em encontrar uma fun\c{c}\~{a}o $x(\cdot)$ que minimize (ou maximize)
o valor de uma dada funcional integral,
\begin{equation}
\label{eq:prb:var:int}
J[x(\cdot)]=\int_a^b L(t,x(t),\dot{x}(t))dt\longrightarrow \min,
\end{equation}
onde $x : [a,b]\rightarrow \mathbb{R}^n$,
$L:[a,b]\times \mathbb{R}^n \times \mathbb{R}^n \rightarrow \mathbb{R}$
e $\dot{x}=\frac{dx}{dt}$. J\'{a} no s\'{e}culo XIX e princ\'{\i}pio do s\'{e}culo XX,
muitos autores tinham contribu\'{\i}do para a teoria
da solu\c{c}\~{a}o de problemas deste tipo. Entre outros,
sobressaem os nomes de Weierstrass, Bliss e Bolza.
Deste meados do s\'{e}culo XX, o c\'{a}lculo das varia\c{c}\~{o}es alargou-se a v\'{a}rios ramos:
foi crescendo at\'{e} aos dias de hoje, encontrando in\'{u}meras aplica\c{c}\~{o}es pr\'{a}ticas,
na f\'{\i}sica, economia, ci\^{e}ncias dos materiais, engenharia e biologia \cite{SPM:Cristiana:Emmanuel}.
Foi da aplica\c{c}\~{a}o do c\'{a}lculo das varia\c{c}\~{o}es \`{a} f\'{\i}sica que Emmy Noether chegou
ao seu famoso teorema, que estabelece uma rela\c{c}\~{a}o entre a exist\^{e}ncia de
simetrias das funcionais integrais \eqref{eq:prb:var:int} e a exist\^{e}ncia de leis de conserva\c{c}\~{a}o.
Como caso particular do teorema de Noether podemos explicar todas as leis de conserva\c{c}\~{a}o da
mec\^{a}nica. O nosso objetivo principal \'{e} introduzir
o moderno c\'{a}lculo das varia\c{c}\~{o}es estoc\'{a}stico e, em particular,
obter uma formula\c{c}\~{a}o estoc\'{a}stica para o teorema de Noether.

O trabalho encontra-se organizado em seis sec\c{c}\~{o}es.
Come\c{c}amos, na Sec\c{c}\~{a}o~\ref{sec:cv}, por introduzir
o c\'{a}lculo das varia\c{c}\~{o}es cl\'{a}ssico/determin\'{\i}stico,
estudando os conte\'{u}dos mais importantes de um qualquer c\'{a}lculo das varia\c{c}\~{o}es:
formula\c{c}\~{a}o do problema fundamental \eqref{eq:Func},
demonstra\c{c}\~{a}o da equa\c{c}\~{a}o de Euler--Lagrange \eqref{eq:EL}, defini\c{c}\~{a}o de extremal
(Defini\c{c}\~{a}o~\ref{def:extrenal}) e obten\c{c}\~{a}o da condi\c{c}\~{a}o necess\'{a}ria de DuBois--Reymond \eqref{eq:cond:DB:R}.
Na Sec\c{c}\~{a}o~\ref{sec:tcN} demonstramos o teorema cl\'{a}ssico de Noether (Teorema~\ref{TEORMN}).
Como exemplos ilustrativos de leis de conserva\c{c}\~{a}o,
obtemos conserva\c{c}\~{a}o da quantidade de movimento (Exemplo~\ref{ex:cons:mov})
e conserva\c{c}\~{a}o de energia (Exemplo~\ref{ex:cons:energy}). Na Sec\c{c}\~{a}o~\ref{sec:cvEst}
introduzimos, com detalhe, o c\'{a}lculo das varia\c{c}\~{o}es estoc\'{a}stico. O resultado
principal do trabalho, o teorema de Noether estoc\'{a}stico (Teorema~\ref{thm:TN:Est}),
\'{e} demonstrado na Sec\c{c}\~{a}o~\ref{sec:TN:Est}. Terminamos com a Sec\c{c}\~{a}o~\ref{sec:conc}
de conclus\~{a}o, apontando um problema em aberto: a obten\c{c}\~{a}o de um teorema de Noether estoc\'{a}stico
para o caso n\~{a}o aut\'{o}nomo, com mudan\c{c}a da vari\'{a}vel independente.


\section{O C\'{a}lculo das Varia\c{c}\~{o}es}
\label{sec:cv}

O c\'{a}lculo das varia\c{c}\~{o}es \'{e} quase t\~{a}o antigo quanto o pr\'{o}prio C\'{a}lculo,
tendo, os dois assuntos, sido desenvolvidos em paralelo. O c\'{a}lculo variacional \'{e} um alicerce
de muitas teorias em F\'{\i}sica e tem um papel fundamental nessa \'{a}rea bem como na Matem\'{a}tica.
O problema de minimizar funcionais do tipo \eqref{eq:prb:var:int} \'{e}, via de regra,
sujeito a diferentes tipos de restri\c{c}\~{o}es como, por exemplo:
\begin{itemize}
\item condi\c{c}\~{o}es em $x$ nas extremidades do intervalo,
\textrm{i.e.}, $x(a)=A$ e/ou $x(b)=B$;

\item exigir que $g(t,x(t),\dot{x}(t))\equiv 0$ para $t \in [a,b]$,
onde $g:[a,b]\times \mathbb{R}^n \times \mathbb{R}^n\rightarrow \mathbb{R}$ \'{e} dada;

\item exigir que $\int_a^b g(t,x(t),\dot{x}(t))dt=c$, onde
$g:[a,b]\times \mathbb{R}^n \times \mathbb{R}^n\rightarrow \mathbb{R}$
e $c \in \mathbb{R}$.
\end{itemize}
O primeiro tipo de restri\c{c}\~{a}o \'{e} denominado \emph{condi\c{c}\~{a}o de contorno}
e pode ser exigido em ambos ou em apenas um dos extremos do intervalo $[a,b]$.
O segundo tipo de restri\c{c}\~{a}o \'{e} denominado de \emph{restri\c{c}\~{a}o Lagrangiana},
devido \`{a} sua semelhan\c{c}a com as restri\c{c}\~{o}es presentes nos problemas da mec\^{a}nica Lagrangiana.
O terceiro tipo \'{e} denominado de \emph{restri\c{c}\~{a}o isoperim\'{e}trica} (ou integral),
uma vez que os primeiros problemas relacionados com esta restri\c{c}\~{a}o apresentavam
a exig\^{e}ncia dos candidatos $x$ terem todos o mesmo comprimento/per\'{\i}metro.
O intervalo $[a,b]$ n\~{a}o precisa ser necessariamente fixo. Este \'{e} o caso quando uma das
condi\c{c}\~{o}es de contorno \'{e} descrita pela curva de n\'{\i}vel de uma dada fun\c{c}\~{a}o
$\sigma:\mathbb{R}^{1+n}\rightarrow \mathbb{R}$. Temos ent\~{a}o uma restri\c{c}\~{a}o do tipo
\begin{itemize}
\item $\sigma(T,x(T))=0$, $T>a$,
\end{itemize}
que \'{e} denominada \emph{condi\c{c}\~{a}o (de contorno) transversal}.
As restri\c{c}\~{o}es acima podem aparecer de forma combinada,
de modo que um mesmo problema pode estar sujeito a restri\c{c}\~{o}es
de diferentes tipos ou a v\'{a}rias restri\c{c}\~{o}es do mesmo tipo.
Analisamos aqui apenas os problemas com condi\c{c}\~{o}es de contorno.
O problema fundamental consiste na determina\c{c}\~{a}o de uma fun\c{c}\~{a}o
$x(\cdot)\in C^2$ que minimiza uma dada funcional $J[\cdot]$
quando sujeita a duas condi\c{c}\~{o}es de contorno:
\begin{equation}
\label{eq:Func}
\begin{gathered}
J[x(\cdot)]=\int_a^b L\left(t,x(t),\dot{x}(t)\right)dt \longrightarrow \min,\\
x(\cdot) \in  C^2\left([a,b];\mathbb{R}^n\right),\\
x(a)=A, \quad x(b)=B,
\end{gathered}
\end{equation}
onde supomos $a<b$, $A, B \in \mathbb{R}^n$ e $L(\cdot,\cdot,\cdot) \in C^2$
em rela\c{c}\~{a}o a todos os seus argumentos. Para facilidade de apresenta\c{c}\~{a}o,
consideramos de seguida o caso escalar, isto \'{e}, $n=1$.

\begin{theorem}[Condi\c{c}\~{a}o necess\'{a}ria de otimalidade
para o problema \eqref{eq:Func} --- equa\c{c}\~{a}o de Euler--Lagrange]
\label{theo:EL}
Se $x(\cdot)$ \'{e} minimizante do problema \eqref{eq:Func},
ent\~{a}o $x(\cdot)$ satisfaz a equa\c{c}\~{a}o de Euler--Lagrange:
\begin{equation}
\label{eq:EL}
L_x(t,x(t),\dot{x}(t))-\frac{d}{dt} L_{\dot{x}}(t,x(t),\dot{x}(t))=0.
\end{equation}
\end{theorem}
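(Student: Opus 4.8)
The plan is to derive the Euler--Lagrange equation by the classical technique of variations: perturb the minimizer, reduce the minimality condition to the vanishing of a first derivative at zero, and then eliminate the arbitrary perturbation by integration by parts together with the fundamental lemma of the calculus of variations.

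First I would fix an arbitrary variation $\eta(\cdot) \in C^2([a,b];\mathbb{R})$ satisfying the homogeneous boundary conditions $\eta(a) = \eta(b) = 0$, and consider the one-parameter family of admissible curves $x(\cdot) + \varepsilon \eta(\cdot)$, which for every $\varepsilon \in \mathbb{R}$ lies in $C^2([a,b];\mathbb{R})$ and still satisfies $x(a)+\varepsilon\eta(a)=A$, $x(b)+\varepsilon\eta(b)=B$. Define $\phi(\varepsilon) = J[x(\cdot)+\varepsilon\eta(\cdot)] = \int_a^b L(t, x(t)+\varepsilon\eta(t), \dot{x}(t)+\varepsilon\dot{\eta}(t))\,dt$. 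Since $x(\cdot)$ is a minimizer, $\phi$ has a minimum at $\varepsilon = 0$, hence $\phi'(0) = 0$. Because $L \in C^2$, differentiation under the integral sign is justified, and I obtain
\begin{equation*}
\phi'(0) = \int_a^b \left[ L_x(t,x(t),\dot{x}(t))\,\eta(t) + L_{\dot{x}}(t,x(t),\dot{x}(t))\,\dot{\eta}(t) \right] dt = 0 .
\end{equation*}

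Next I would integrate the second term by parts. Since $x(\cdot) \in C^2$ and $L \in C^2$, the map $t \mapsto L_{\dot{x}}(t,x(t),\dot{x}(t))$ is $C^1$, so the integration by parts is legitimate; the boundary term $\bigl[ L_{\dot{x}}\,\eta \bigr]_a^b$ vanishes because $\eta(a)=\eta(b)=0$. This yields
\begin{equation*}
\int_a^b \left[ L_x(t,x(t),\dot{x}(t)) - \frac{d}{dt} L_{\dot{x}}(t,x(t),\dot{x}(t)) \right] \eta(t)\, dt = 0
\end{equation*}
for every admissible $\eta$. Finally I would invoke the fundamental lemma of the calculus of variations: if a continuous function $h$ on $[a,b]$ satisfies $\int_a^b h(t)\eta(t)\,dt = 0$ for all $C^2$ functions $\eta$ vanishing at the endpoints, then $h \equiv 0$ on $[a,b]$. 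Applying this with $h(t) = L_x(t,x(t),\dot{x}(t)) - \frac{d}{dt} L_{\dot{x}}(t,x(t),\dot{x}(t))$ — which is continuous by the regularity assumptions — gives \eqref{eq:EL}.

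The step requiring the most care is the last one: the fundamental lemma must be stated and proved (or cited), and it is essential that the class of admissible variations $\eta$ is rich enough — vanishing at both endpoints but otherwise arbitrary — for the lemma to force $h \equiv 0$. The standard argument is by contradiction: if $h(t_0) \neq 0$ at some interior point, then by continuity $h$ keeps a constant sign on a subinterval, and one builds a smooth bump function $\eta$ supported there (for instance a suitable power of $(t-t_1)(t_2-t)$ on $[t_1,t_2]$, zero elsewhere) to make the integral strictly nonzero, a contradiction. Everything else — the differentiation under the integral sign and the integration by parts — is routine given the $C^2$ hypotheses on $L$ and $x(\cdot)$.
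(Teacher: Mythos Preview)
Your proof is correct and follows essentially the same approach as the paper's: perturb the minimizer by $\varepsilon\eta$ with $\eta(a)=\eta(b)=0$, use minimality to get the vanishing of the first variation, integrate by parts, and invoke the fundamental lemma of the calculus of variations. The only differences are notational ($\eta$ versus $\phi$ for the variation) and that you add slightly more justification for differentiating under the integral and for the regularity needed in the integration by parts, as well as a sketch of the fundamental lemma, which the paper simply cites.
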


\begin{proof} (seguindo Lagrange)
Seja $x(\cdot)$ um minimizante do problema \eqref{eq:Func}.
Considere-se uma fun\c{c}\~{a}o admiss\'{\i}vel, na vizinhan\c{c}a de $x(\cdot)$, arbitr\'{a}ria.
Tal fun\c{c}\~{a}o pode ser escrita na forma $x(\cdot)+\varepsilon \phi(\cdot)$,
com $\phi(a)=\phi(b)=0$. Por defini\c{c}\~{a}o de minimizante, a fun\c{c}\~{a}o
\begin{equation*}
\Phi(\varepsilon)=J[x(\cdot)+\varepsilon \phi(\cdot)]
= \int_a^b L\left(t,x(t)+\varepsilon \phi(t),\dot{x}(t)+\varepsilon \dot{\phi}(t)\right)dt
\end{equation*}
tem m\'{\i}nimo para $\varepsilon=0$, para qualquer $\phi(\cdot)$. Como a funcional $J$
tem um m\'{\i}nimo em $x(\cdot)$ e $x(a)+\varepsilon\phi(a)=A$ e $x(b)+\varepsilon\phi(b)=B$,
ent\~{a}o $\Phi(\varepsilon)$ tem m\'{\i}nimo no ponto $\varepsilon=0$. Logo,
\begin{equation*}
\Phi{'}(0)=\int_a^b \left(\bar{L}_{x}(t)\phi(t)+\bar{L}_{\dot{x}}(t)\dot{\phi}(t)\right)dt = 0,
\end{equation*}
onde $\bar{L}_{x}(t)=L_{x}(t,x(t),\dot{x}(t))$ e $\bar{L}_{\dot{x}}(t)=L_{\dot{x}}(t,x(t),\dot{x}(t))$.
Integrando por partes, vem que
$$
\int_a^b \bar{L}_{\dot{x}}(t)\dot{\phi}(t)dt
=\bar{L}_{\dot{x}}(t) \phi(t)\mid_a^b
-\int_a^b \frac{d}{dt} \bar{L}_{\dot{x}}(t) \phi(t)
=-\int_a^b \frac{d}{dt} \bar{L}_{\dot{x}}(t) \phi(t)
$$
e, ent\~{a}o,
\begin{equation}
\label{eq:EQLD}
0=\Phi'(0)=\int_a^b \left(\bar{L}_x(t)\phi(t)
+\bar{L}_{\dot{x}}(t)\dot{\phi}(t) \right) dt
=\int_a^b \left(\bar{L}_x(t)-\frac{d}{dt}
\bar{L}_{\dot{x}}(t)\right)\phi(t)dt.
\end{equation}
Utilizamos, agora, o seguinte lema auxiliar,
cuja demonstra\c{c}\~{a}o pode ser encontrada, por exemplo,
em \cite{CD:Frederico} ou em \cite{RelAssociado}.
\begin{lemma}[\Index{Lema fundamental do c\'{a}lculo das varia\c{c}\~{o}es}]
\label{LemaF}
Se $g(\cdot)$ \'{e} cont\'{\i}nua em $[a,b]$ e
\begin{equation*}
\int_a^b g(x)\phi(x)dx=0
\end{equation*}
para todas as fun\c{c}\~{o}es $\phi \in C^2([a,b];\mathbb{R})$ com $\phi(a)=\phi(b)=0$,
ent\~{a}o $g(x) \equiv 0$.
\end{lemma}
\noindent De \eqref{eq:EQLD} resulta do Lema~\ref{LemaF} a conclus\~{a}o pretendida:
$\bar{L}_x(t)-\frac{d}{dt} \bar{L}_{\dot{x}}(t)=0$.
\end{proof}

\begin{definition}
\label{def:extrenal}
As solu\c{c}\~{o}es da equa\c{c}\~{a}o diferencial de Euler--Lagrange \eqref{eq:EL}
(desconsiderando as condi\c{c}\~{o}es de contorno) s\~{a}o denominadas
\emph{fun\c{c}\~{o}es estacion\'{a}rias} ou \emph{extremais},
independentemente do facto de serem ou n\~{a}o solu\c{c}\~{o}es
do problema variacional \eqref{eq:Func}.
\end{definition}

Vamos apresentar um exemplo de aplica\c{c}\~{a}o do Teorema~\ref{theo:EL}.

\begin{example}
Considere-se a funcional integral
\begin{equation}
\label{eq:Prob2}
J[x(\cdot)]=\int_0^\pi \left(\dot{x}^2(t)-kx^2(t)\right) dt
\end{equation}
sob as condi\c{c}\~{o}es de contorno $x(0)=x(\pi)=0$, onde $k$ \'{e} uma constante positiva.
As fun\c{c}\~{o}es admiss\'{\i}veis pertencem \`{a} classe $C^2$.
Se $x(\cdot)$ \'{e} minimizante de \eqref{eq:Prob2}, ent\~{a}o
resulta de \eqref{eq:EL} que
\begin{equation*}
\frac{d}{dt}(2\dot{x}(t))+2kx(t)=0 \Leftrightarrow \ddot{x}(t)+kx(t)=0.
\end{equation*}
Trata-se de uma equa\c{c}\~{a}o diferencial ordin\'{a}ria homog\'{e}nea,
de coeficientes constantes, com solu\c{c}\~{a}o
\begin{equation*}
x(t)=C_1\cos\left(\sqrt{k} t\right)+C_2\sin\left(\sqrt{k} t\right),
\end{equation*}
onde $C_1$ e $C_2$ s\~{a}o constantes de integra\c{c}\~{a}o.
As condi\c{c}\~{o}es de contorno $x(0)=x(\pi)=0$ implicam que
$C_1=0$ e $C_2\sin\left(\sqrt{k}\pi \right)=0$.
Duas situa\c{c}\~{o}es ocorrem, consoante $\sqrt{k}$ \'{e}, ou n\~{a}o, um n\'{u}mero inteiro.
Se $\sqrt{k}\in \mathbb{N}$, ent\~{a}o $\sin\left(\sqrt{k}\pi\right)=0$
e, nesta situa\c{c}\~{a}o, temos uma infinidade de extremais da forma
$x(t)=C_2\sin\left(\sqrt{k}\pi\right)$, $C_2 \in \mathbb{R}$.
Se $\sqrt{k}$ n\~{a}o for um inteiro, ent\~{a}o $C_2=0$
e a \'{u}nica extremal \'{e} $x(t)=0$, $\forall t \in [0,\pi]$.
\end{example}

Uma outra condi\c{c}\~{a}o necess\'{a}ria cl\'{a}ssica
do c\'{a}lculo das varia\c{c}\~{o}es \'{e} a condi\c{c}\~{a}o
necess\'{a}ria de DuBois--Reymond.

\begin{theorem}
\label{CORLDBR}
Uma condi\c{c}\~{a}o necess\'{a}ria para $x(\cdot)$ ser solu\c{c}\~{a}o do problema fundamental
do c\'{a}lculo das varia\c{c}\~{o}es \'{e} dada pela condi\c{c}\~{a}o de DuBois--Reymond:
\begin{equation}
\label{eq:cond:DB:R}
\frac{\partial L}{\partial t}(t,x(t),\dot{x}(t))
=\frac{d}{dt}\left\{L(t,x(t),\dot{x}(t))
-\frac{\partial L}{\partial \dot{x}}(t,x(t),\dot{x}(t))\dot{x}(t)\right\}.
\end{equation}
\end{theorem}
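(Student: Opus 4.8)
The plan is to derive the DuBois--Reymond condition \eqref{eq:cond:DB:R} directly from the Euler--Lagrange equation \eqref{eq:EL}, which we may assume holds along any minimizer by Theorem~\ref{theo:EL}. Concretely, I would start from the total derivative of the composite function $t \mapsto L(t,x(t),\dot{x}(t))$ along an extremal and expand it by the chain rule:
\begin{equation*}
\frac{d}{dt} L(t,x(t),\dot{x}(t)) = \frac{\partial L}{\partial t} + \frac{\partial L}{\partial x}\dot{x}(t) + \frac{\partial L}{\partial \dot{x}}\ddot{x}(t),
\end{equation*}
where all partial derivatives are evaluated at $(t,x(t),\dot{x}(t))$. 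This step uses the $C^2$ regularity of both $L$ and $x(\cdot)$, which is exactly what the problem formulation \eqref{eq:Func} guarantees, so $\ddot{x}$ exists and the expression makes sense.

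Next I would compute the total derivative of the quantity whose derivative appears on the right-hand side of \eqref{eq:cond:DB:R}, namely
\begin{equation*}
\frac{d}{dt}\left\{L - \frac{\partial L}{\partial \dot{x}}\dot{x}\right\} = \frac{d}{dt} L - \left(\frac{d}{dt}\frac{\partial L}{\partial \dot{x}}\right)\dot{x}(t) - \frac{\partial L}{\partial \dot{x}}\ddot{x}(t).
\end{equation*}
Substituting the chain-rule expansion of $\frac{d}{dt}L$ from the first step, the two terms containing $\ddot{x}(t)$ cancel, leaving
\begin{equation*}
\frac{d}{dt}\left\{L - \frac{\partial L}{\partial \dot{x}}\dot{x}\right\} = \frac{\partial L}{\partial t} + \left(\frac{\partial L}{\partial x} - \frac{d}{dt}\frac{\partial L}{\partial \dot{x}}\right)\dot{x}(t).
\end{equation*}

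Finally I would invoke the Euler--Lagrange equation \eqref{eq:EL}, which tells us that the bracketed factor multiplying $\dot{x}(t)$ vanishes identically along the minimizer; hence the whole middle term drops out and we are left with $\frac{d}{dt}\{L - L_{\dot{x}}\dot{x}\} = \frac{\partial L}{\partial t}$, which is precisely \eqref{eq:cond:DB:R}. The argument is essentially a bookkeeping computation, so there is no serious obstacle; the only point requiring a little care is justifying the appearance of $\ddot{x}$ (i.e. that we are entitled to differentiate $L_{\dot{x}}(t,x(t),\dot{x}(t))$ as a function of $t$), which again follows from the standing $C^2$ hypotheses and, if one wants to be scrupulous, from the fact that along an extremal $\frac{d}{dt}L_{\dot{x}}$ equals $L_x$ and is therefore continuous. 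I would also remark that this shows the DuBois--Reymond condition is not independent of Euler--Lagrange but is a consequence of it under the regularity assumed here.
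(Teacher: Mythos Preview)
Your derivation is correct and is the standard way to obtain the DuBois--Reymond condition from the Euler--Lagrange equation under the $C^2$ hypotheses assumed in \eqref{eq:Func}: expand $\frac{d}{dt}L$ by the chain rule, subtract the product-rule expansion of $\frac{d}{dt}(L_{\dot{x}}\dot{x})$, observe the cancellation of the $\ddot{x}$ terms, and kill the remaining bracket with \eqref{eq:EL}. Nothing is missing.

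As for comparison with the paper: there is essentially nothing to compare, since the paper does not give a proof here but simply refers the reader to \cite[Corol\'{a}rio~59]{RelAssociado}. Your argument is exactly the kind of short computation one would expect behind that citation, so you have effectively supplied what the paper omits. Your closing remark that, under the present regularity, DuBois--Reymond is a consequence of Euler--Lagrange (rather than an independent condition) is also worth keeping; it is precisely this dependence that is exploited later in the proof of the Noether theorem (Teorema~\ref{TEORMN}).
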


\begin{proof}
Veja-se \cite[Corol\'{a}rio~59]{RelAssociado}.
\end{proof}


\section{Teorema Cl\'{a}ssico de Noether}
\label{sec:tcN}

Emmy Noether, nascida em mar\c{c}o de 1882 na Bav\'{a}ria, Alemanha,
\'{e} considerada a ``m\~{a}e'' da \'{A}lgebra Moderna (Abstrata).
Ap\'{o}s Albert Einstein publicar a sua teoria da Relatividade Geral,
os matem\'{a}ticos ficaram alvoro\c{c}ados e permitiram-se explorar as propriedades desse novo
e revolucion\'{a}rio territ\'{o}rio. A famosa teoria da Relatividade, al\'{e}m de rudimentar
e estranha, apresentava problemas. Emmy Noether deu resposta a importantes quest\~{o}es,
valendo-se da simetria dos problemas variacionais. O teorema de Noether afirma que as
leis de conserva\c{c}\~{a}o s\~{a}o resultado das leis de simetria e constituiu um
grande avan\c{c}o para a \'{e}poca.

Encontrar a solu\c{c}\~{a}o geral da equa\c{c}\~{a}o de Euler--Lagrange \eqref{eq:EL} consiste em determinar
solu\c{c}\~{o}es de uma equa\c{c}\~{a}o diferencial de segunda ordem, geralmente n\~{a}o linear e de dif\'{\i}cil resolu\c{c}\~{a}o.
As leis de conserva\c{c}\~{a}o s\~{a}o especificadas por fun\c{c}\~{o}es $\Phi(t,x(t),\dot{x}(t))$ constantes ao longo
de todas as solu\c{c}\~{o}es $x$ da equa\c{c}\~{a}o de Euler--Lagrange, o que permite baixar
a ordem das equa\c{c}\~{o}es diferenciais dadas pelas condi\c{c}\~{o}es necess\'{a}rias de otimalidade,
simplificando o processo de resolu\c{c}\~{a}o dos problemas do c\'{a}lculo das
varia\c{c}\~{o}es (e controlo \'{o}timo) \cite{ejc}.

Vamos considerar um grupo uni-param\'{e}trico de transforma\c{c}\~{o}es de classe $C^2$, da forma
\begin{equation}
\label{eq:UNIP}
\overline{t}=\Phi(t,x,\varepsilon),
\quad \overline{x}=\Psi(t,x,\varepsilon),
\end{equation}
onde $\Phi$ e $\Psi$ s\~{a}o fun\c{c}\~{o}es anal\'{\i}ticas de
$[a,b]\times \mathbb{R}^n \times ]-\varepsilon,\varepsilon[\rightarrow\mathbb{R}$.
Admitimos que a transforma\c{c}\~{a}o \eqref{eq:UNIP} para $\varepsilon=0$ reduz-se \`{a} identidade:
$\overline{t}=\Phi(t,x,0)=t$ e $\overline{x}=\Psi(t,x,0)=x$.
Numa vizinhan\c{c}a de $\varepsilon=0$,
$\Phi$ e $\Psi$ podem ser expandidas em s\'{e}rie de Taylor:
\begin{equation*}
\begin{gathered}
\overline{t}=t+\varepsilon\frac{\partial \Phi(t,x,0)}{\partial \varepsilon}
+\varepsilon^2\frac{1}{2!}\frac{\partial^2\Phi(t,x,0)}{\partial \varepsilon^2} \cdots
=t+T(t,x)\varepsilon+o(\varepsilon),\\
\overline{x}(\overline{t})=x(t)+\varepsilon\frac{\partial \Psi(t,x,0)}{\partial \varepsilon}
+\varepsilon^2\frac{1}{2!}\frac{\partial^2\Psi(t,x,0)}{\partial \varepsilon^2} \cdots
=x+X(t,x)\varepsilon+o(\varepsilon),
\end{gathered}
\end{equation*}
onde
\begin{equation*}
T(t,x)=\frac{\partial \Phi(t,x,0)}{\partial \varepsilon},
\quad X(t,x)=\frac{\partial \Psi(t,x,0)}{\partial \varepsilon}.
\end{equation*}
Na literatura, as fun\c{c}\~{o}es $T$ e $X$ s\~{a}o designadas por geradores infinitesimais
das transforma\c{c}\~{o}es $\Phi$ e $\Psi$, respetivamente.

\begin{definition}[Invari\^{a}ncia de uma funcional integral \eqref{eq:prb:var:int} do c\'{a}lculo das varia\c{c}\~{o}es]
\label{DEFI}
Dizemos que a funcional integral \eqref{eq:prb:var:int} \'{e} invariante sob as
transforma\c{c}\~{a}o infinitesimais
\begin{equation*}
\begin{cases}
\bar{t}=t+T(t,x)\varepsilon+o(\varepsilon),\\
\bar{x}=x+X(t,x)\varepsilon+o(\varepsilon),
\end{cases}
\end{equation*}
se
\begin{equation}
\label{eq:CONDI2}
\int_{t_a}^{t_b}L(t,x(t),\dot{x}(t))dt
=\int_{\bar{t}(t_a)}^{\bar{t}(t_b)}
L\left(\bar{t}, \bar{x}(\bar{t}), \dot{\bar{x}}(\bar{t})\right)d\bar{t}
\end{equation}
para todo o subintervalo $[t_a,t_b] \subseteq [a,b]$.
\end{definition}

\begin{theorem}[Condi\c{c}\~{a}o necess\'{a}ria de invari\^{a}ncia]
\label{TEORI}
Se a funcional integral \eqref{eq:prb:var:int} \'{e} invariante
no sentido da Defini\c{c}\~{a}o~\ref{DEFI}, ent\~{a}o
\begin{multline}
\label{eq:CMSI}
\frac{\partial L}{\partial t}(t,x(t),\dot{x}(t))T(t,x)
+\frac{\partial L}{\partial x}(t,x(t),\dot{x}(t))X(t,x)\\
+\frac{\partial L}{\partial \dot{x}}(t,x(t),\dot{x}(t))\left(\dot{X}(t,x)
-\dot{x}\dot{T}(t,x)\right)+L(t,x(t),\dot{x}(t))\dot{T}(t,x)=0.
\end{multline}
\end{theorem}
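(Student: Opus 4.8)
\textbf{Proposta de demonstra\c{c}\~{a}o.} O plano \'{e} partir da condi\c{c}\~{a}o de invari\^{a}ncia \eqref{eq:CONDI2}, reduzi-la a uma identidade pontual entre integrandos e, em seguida, derivar em ordem a $\varepsilon$ no ponto $\varepsilon=0$. Primeiro, no membro direito de \eqref{eq:CONDI2} efetuaria a mudan\c{c}a de vari\'{a}vel $\bar{t}=\bar{t}(t)=\Phi(t,x(t),\varepsilon)$, com $d\bar{t}=\dot{\bar{t}}(t)\,dt$ e, pela regra da cadeia, $\dot{\bar{x}}(\bar{t})=\dot{\bar{x}}(t)/\dot{\bar{t}}(t)$, onde o ponto passa a denotar $\frac{d}{dt}$. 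Obt\'{e}m-se
\begin{equation*}
\int_{t_a}^{t_b} L(t,x(t),\dot{x}(t))\,dt
= \int_{t_a}^{t_b} L\!\left(\bar{t}(t), \bar{x}(t), \frac{\dot{\bar{x}}(t)}{\dot{\bar{t}}(t)}\right)\dot{\bar{t}}(t)\,dt .
\end{equation*}
Como esta igualdade vale para \emph{todo} o subintervalo $[t_a,t_b]\subseteq[a,b]$, derivando em ordem ao extremo superior (ou invocando o Lema~\ref{LemaF}) concluo que os integrandos coincidem:
\begin{equation*}
L(t,x(t),\dot{x}(t))
= L\!\left(\bar{t}(t), \bar{x}(t), \frac{\dot{\bar{x}}(t)}{\dot{\bar{t}}(t)}\right)\dot{\bar{t}}(t) ,
\end{equation*}
identidade v\'{a}lida para todo o $\varepsilon$ numa vizinhan\c{c}a de $0$.

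Em seguida derivaria ambos os membros desta identidade em ordem a $\varepsilon$ e avaliaria em $\varepsilon=0$. O membro esquerdo n\~{a}o depende de $\varepsilon$, logo a sua derivada anula-se. No membro direito aplicaria a regra do produto e da cadeia, usando que para $\varepsilon=0$ se tem $\bar{t}=t$, $\bar{x}=x$, $\dot{\bar{t}}=1$, $\dot{\bar{x}}/\dot{\bar{t}}=\dot{x}$, $\partial_\varepsilon \bar{t}|_{0}=T$, $\partial_\varepsilon \bar{x}|_{0}=X$, $\partial_\varepsilon \dot{\bar{t}}|_0=\frac{d}{dt}\left(\partial_\varepsilon\bar{t}\right)\big|_0=\dot{T}$ e, pela regra do quociente, $\partial_\varepsilon\!\left(\dot{\bar{x}}/\dot{\bar{t}}\right)\big|_0=\dot{X}-\dot{x}\,\dot{T}$. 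Reagrupando os quatro termos resultantes obt\'{e}m-se exatamente \eqref{eq:CMSI}.

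O ponto mais delicado, e ao qual dedicaria mais aten\c{c}\~{a}o, \'{e} duplo: justificar com rigor a passagem da igualdade integral para a igualdade dos integrandos (explorando a arbitrariedade de $[t_a,t_b]$) e, sobretudo, manusear corretamente os s\'{\i}mbolos $\dot{T}$ e $\dot{X}$, que designam derivadas \emph{totais} ao longo da extremal, $\dot{T}=\frac{d}{dt}T(t,x(t))=\partial_t T+\partial_x T\,\dot{x}$ (analogamente para $\dot{X}$); s\~{a}o precisamente estes termos que surgem ao trocar $\partial_\varepsilon$ com $\frac{d}{dt}$, troca l\'{\i}cita gra\c{c}as \`{a} hip\'{o}tese $\Phi,\Psi\in C^2$. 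Feitos estes cuidados, o restante \'{e} c\'{a}lculo de rotina.
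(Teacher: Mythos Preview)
A proposta est\'{a} correta e segue essencialmente o mesmo percurso da demonstra\c{c}\~{a}o do artigo: reduzir a igualdade integral \eqref{eq:CONDI2} a uma identidade pontual entre integrandos (pela arbitrariedade de $[t_a,t_b]$) e depois derivar em $\varepsilon$ e avaliar em $\varepsilon=0$. A tua vers\~{a}o \'{e} at\'{e} mais expl\'{\i}cita do que a do artigo no tratamento da mudan\c{c}a de vari\'{a}vel, da regra do quociente para $\partial_\varepsilon(\dot{\bar{x}}/\dot{\bar{t}})|_0$ e da troca $\partial_\varepsilon\leftrightarrow\frac{d}{dt}$, pontos que o artigo deixa impl\'{\i}citos.
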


\begin{proof}
A equa\c{c}\~{a}o \eqref{eq:CONDI2} \'{e} valida para todo o subintervalo $[t_a, t_b] \in [a, b]$,
o que nos permite escrever esta equa\c{c}\~{a}o sem o sinal de integral, ou seja, se
\begin{equation*}
\begin{cases}
\bar{t}=t+T(t,x)\varepsilon+o(\varepsilon),\\
\bar{x}=x+X(t,x)\varepsilon+o(\varepsilon),\\
\frac{d\bar{x}}{d\bar{t}}=\frac{\bar{x}+\varepsilon\bar{X}
+o(\varepsilon)}{1+\varepsilon\bar{T}+o(\varepsilon)},
\end{cases}
\end{equation*}
ent\~{a}o
\begin{equation*}
L(t,x(t),\bar{x}(t))=L\left(t+T(t,x)\varepsilon+o(\varepsilon),x+X(t,x)\varepsilon
+o(\varepsilon),\frac{\dot{x}+\varepsilon\dot{X}+o(\varepsilon)}{1+\varepsilon\dot{T}
+o(\varepsilon)}\right)\frac{d\bar{t}}{dt}.
\end{equation*}
Derivando os dois membros da igualdade em ordem a $\varepsilon$,
e fazendo $\varepsilon=0$, vem que
\begin{equation*}
0=\frac{\partial L}{\partial t}(t,x,\dot{x})T
+\frac{\partial L}{\partial x}(t,x,\dot{x})X+\frac{\partial L}{\partial
\bar{x}}(t,x,\dot{x})\left(\dot{X}-\dot{x}\dot{T}\right)+L(t,x,\dot{x})\dot{T}.
\end{equation*}
\end{proof}

\begin{example}
Consideremos a funcional integral
\begin{equation}
\label{PROBN}
J[x(\cdot)]=\int_a^b \dot{x}^2(t)dt.
\end{equation}
Neste caso temos $L(t,x(t),\dot{x}(t))=\dot{x}(t)
\Rightarrow \frac{\partial L}{\partial t}
=\frac{\partial L}{\partial x}=0$ e
$\frac{\partial L}{\partial \dot{x}}=2\dot{x}(t)$. Como
\begin{equation*}
\begin{gathered}
T(t,x)\Rightarrow \dot{T}=\frac{dT}{dt}
=\frac{\partial T}{\partial t}+\dot{x}\frac{\partial T}{\partial x},\\
X(t,x)\Rightarrow \dot{X}=\frac{dX}{dt}
=\frac{\partial X}{\partial t}+\dot{x}\frac{\partial X}{\partial x},
\end{gathered}
\end{equation*}
da equa\c{c}\~{a}o \eqref{eq:CMSI} resulta que
\begin{multline*}
2\dot{x}(t)\left(\frac{\partial X}{\partial t}
+\dot{x}\frac{\partial X}{\partial x}
-\dot{x}\left(\frac{\partial T}{\partial t}
+\dot{x}\frac{\partial T}{\partial x}\right)\right)
+\dot{x}^2\left(\frac{\partial T}{\partial t}
+\dot{x}\frac{\partial T}{\partial x}\right)=0\\
\Leftrightarrow \dot{x}^3\frac{\partial T}{\partial x}
+\dot{x}\left(2\frac{\partial X}{\partial x}
-\frac{\partial T}{\partial t}\right)
+2\dot{x}\frac{\partial X}{\partial t}=0,
\end{multline*}
\begin{gather}
\frac{\partial T}{\partial x}=0, \label{eq:PRON1}\\
2\frac{\partial X}{\partial x}-\frac{\partial T}{\partial t}=0, \label{eq:PRON2}\\
2\frac{\partial X}{\partial t}=0. \label{eq:PRON3}
\end{gather}
De \eqref{eq:PRON1} podemos afirmar que $T$ n\~{a}o depende de $x$.
Ent\~{a}o $T(t,x)=T(t)$ e de \eqref{eq:PRON3} conclu\'{\i}mos que
$X$ n\~{a}o depende de $t$, isto \'{e}, $X(t,x)=X(x)$.
A equa\c{c}\~{a}o \eqref{eq:PRON2} \'{e} satisfeita se
$2\frac{\partial X}{\partial x}=\frac{\partial T}{\partial t}$.
Isto implica que $2\frac{\partial X}{\partial x}=constante
=\frac{\partial T}{\partial t}$, ou seja,
\begin{equation}
\label{eq:PRON4}
X(x)=cx+b_1, \quad T(t)=2ct+b_2,
\end{equation}
onde $c$, $b_1$ e $b_2$ s\~{a}o constantes.
Podemos verificar que os geradores infinitesimais
definidos em \eqref{eq:PRON4} representam as simetrias para que
a funcional \eqref{PROBN} seja invariante no sentido da Defini\c{c}\~{a}o~\ref{DEFI}.
\end{example}

O pr\'{o}ximo teorema \'{e} um dos resultados mais importante da F\'{\i}sica moderna e n\~{a}o s\'{o}:
o teorema de Noether, que foi formulado e demonstrado em 1918 por Emmy Noether, \'{e}
muito mais que um teorema. \'{E} um princ\'{\i}pio geral sobre leis de conserva\c{c}\~{a}o,
com importantes implica\c{c}\~{o}es em v\'{a}rias \'{a}reas da F\'{\i}sica Moderna, na Qu\'{\i}mica, na Economia,
nos problemas Estoc\'{a}sticos, etc.

\begin{theorem}[Teorema de Noether]
\label{TEORMN}
Se a funcional integral \eqref{eq:prb:var:int}
\'{e} invariante no sentido da Defini\c{c}\~{a}o~\ref{DEFI}, ent\~{a}o
\begin{equation}
\label{eq:LGN}
\frac{\partial L}{\partial \dot{x}}(t,x(t),\dot{x}(t))X(t,x(t))
-\left(L(t,x(t),\dot{x}(t))
-\frac{\partial L}{\partial \dot{x}}(t,x(t),\dot{x}(t))\dot{x}(t)\right)T(t,x(t))=const
\end{equation}
\'{e} uma lei de conserva\c{c}\~{a}o.
\end{theorem}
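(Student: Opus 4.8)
A ideia é combinar a condição necessária de invariância do Teorema~\ref{TEORI} com a equação de Euler--Lagrange \eqref{eq:EL} e a condição de DuBois--Reymond \eqref{eq:cond:DB:R}, válidas ambas ao longo das extremais. Primeiro escrevo a tese \eqref{eq:LGN} na forma abreviada
\begin{equation*}
\Phi(t)=\bar{L}_{\dot{x}}(t)X(t,x(t))-\left(\bar{L}(t)-\bar{L}_{\dot{x}}(t)\dot{x}(t)\right)T(t,x(t)),
\end{equation*}
onde uso a notação com barra do Teorema~\ref{theo:EL}, e calculo $\frac{d}{dt}\Phi(t)$ pela regra do produto, obtendo quatro grupos de termos: os que contêm $\dot{X}$ e $\dot{T}$, os que contêm $X$ e $T$ (sem derivar), e as derivadas totais $\frac{d}{dt}\bar{L}_{\dot{x}}(t)$ e $\frac{d}{dt}\left(\bar{L}(t)-\bar{L}_{\dot{x}}(t)\dot{x}(t)\right)$.

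**Passos intermédios.** O segundo passo é usar que $x(\cdot)$ é uma extremal para substituir $\frac{d}{dt}\bar{L}_{\dot{x}}(t)=\bar{L}_x(t)$ (equação de Euler--Lagrange \eqref{eq:EL}) no termo que multiplica $X(t,x(t))$; simultaneamente, a condição de DuBois--Reymond \eqref{eq:cond:DB:R} diz que $\frac{d}{dt}\left(\bar{L}(t)-\bar{L}_{\dot{x}}(t)\dot{x}(t)\right)=\frac{\partial L}{\partial t}(t,x(t),\dot{x}(t))$, o que trata o termo que multiplica $T(t,x(t))$. Depois destas duas substituições, os termos restantes em $\frac{d}{dt}\Phi(t)$ agrupam-se exactamente como o lado esquerdo da condição necessária de invariância \eqref{eq:CMSI}:
\begin{equation*}
\frac{d}{dt}\Phi(t)=\bar{L}_t(t)T(t,x(t))+\bar{L}_x(t)X(t,x(t))+\bar{L}_{\dot{x}}(t)\left(\dot{X}(t,x(t))-\dot{x}(t)\dot{T}(t,x(t))\right)+\bar{L}(t)\dot{T}(t,x(t)).
\end{equation*}
Pelo Teorema~\ref{TEORI}, a invariância da funcional força o membro direito a anular-se, logo $\frac{d}{dt}\Phi(t)=0$ ao longo das extremais, isto é, $\Phi(t)=\text{const}$, que é precisamente \eqref{eq:LGN}.

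**Principal obstáculo.** A parte mais delicada não é conceptual mas sim o acerto algébrico: ao derivar o produto $\bar{L}_{\dot{x}}(t)\dot{x}(t)\,T(t,x(t))$ aparece um termo $\bar{L}_{\dot{x}}(t)\ddot{x}(t)T(t,x(t))$ que deve cancelar contra um termo vindo de $\frac{d}{dt}\bar{L}_{\dot{x}}(t)$ ou ser absorvido pela identidade de DuBois--Reymond — é preciso ter cuidado para não contar duas vezes o mesmo cancelamento e para manter coerente a distinção entre derivada parcial $\frac{\partial L}{\partial t}$ e derivada total $\frac{d}{dt}$. Uma alternativa que evita invocar explicitamente a condição de DuBois--Reymond é desenvolver $\frac{d}{dt}\bar{L}(t)=\bar{L}_t(t)+\bar{L}_x(t)\dot{x}(t)+\bar{L}_{\dot{x}}(t)\ddot{x}(t)$ directamente pela regra da cadeia e usar apenas a equação de Euler--Lagrange; os dois caminhos conduzem ao mesmo resultado, e apresento o que me parecer mais limpo na redacção final.
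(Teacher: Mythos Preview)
Your proposal is correct and follows essentially the same approach as the paper: both combine the invariance condition \eqref{eq:CMSI} with the Euler--Lagrange equation \eqref{eq:EL} and the DuBois--Reymond condition \eqref{eq:cond:DB:R} to recognise a total derivative that vanishes along extremals. The only cosmetic difference is the direction of the argument --- you differentiate the candidate conserved quantity and land on \eqref{eq:CMSI}, whereas the paper starts from \eqref{eq:CMSI} and rewrites it as a total derivative --- but the algebra and the ingredients are identical.
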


\begin{proof}
Vamos fazer a demonstra\c{c}\~{a}o do teorema de Noether usando
a condi\c{c}\~{a}o de DuBois--Reymond dada pelo Teorema~\ref{CORLDBR}.
A equa\c{c}\~{a}o de Euler--Lagrange \eqref{eq:EL} diz-nos que
\begin{equation*}
\frac{\partial L}{\partial x}(t,x(t),\dot{x}(t))
=\frac{d}{dt}\frac{\partial L}{\partial \dot{x}}(t,x(t),\dot{x}(t)).
\end{equation*}
Podemos ent\~{a}o escrever a condi\c{c}\~{a}o necess\'{a}ria de invari\^{a}ncia \eqref{eq:CMSI} como
\begin{multline*}
\frac{\partial L}{\partial t}(t,x(t),\dot{x}(t))T(t,x(t))
+\frac{d}{dt}\frac{\partial L}{\partial \dot{x}}(t,x(t),\dot{x}(t))X(t,x(t))\\
+\frac{\partial L}{\partial \dot{x}}(t,x(t),\dot{x}(t))\left(\dot{X}(t,x(t))-\dot{x}(t)\dot{T}(t,x(t))\right)
+L(t,x(t),\dot{x}(t))\dot{T}(t,x(t))=0,
\end{multline*}
ou seja,
\begin{multline}
\label{eq:TNOCV}
\frac{d}{dt}
\frac{\partial L}{\partial \dot{x}}(t,x(t),\dot{x}(t))X(t,x(t))
+\frac{\partial L}{\partial \dot{x}}(t,x(t),\dot{x}(t))\dot{X}(t,x(t))\\
+\frac{\partial L}{\partial x}(t,x(t),\dot{x}(t))T(t,x(t))+\dot{T}(t,x(t))\left(L(t,x(t),\dot{x}(t))
-\frac{\partial L}{\partial \dot{x}}(t,x(t),\dot{x}(t))\dot{x}(t)\right)=0.
\end{multline}
Usando a condi\c{c}\~{a}o de DuBois--Reymond (Teorema~\ref{CORLDBR})
podemos escrever \eqref{eq:TNOCV} na forma
\begin{equation*}
\begin{gathered}
\frac{d}{dt}\frac{\partial L}{\partial \dot{x}}\left(t,x(t),\dot{x}(t)\right) X(t,x(t))
+\frac{\partial L}{\partial \dot{x}}(t,x(t),\dot{x}(t))\dot{X}(t,x(t))\\
+\frac{d}{dt}\left\{L(t,x(t),\dot{x}(t))-\frac{\partial L}{\partial \dot{x}}(t,x(t),\dot{x}(t))\right\}T(t,x(t))\\
+\dot{T}(t,x(t))\left(L(t,x(t),\dot{x}(t))-\frac{\partial L}{\partial \dot{x}}(t,x(t),\dot{x}(t))\right)=0\\
\Leftrightarrow \frac{d}{dt}\left\{\frac{\partial L}{\partial \dot{x}}(t,x(t),\dot{x}(t))X(t,x(t))
+\left(L(t,x(t),\dot{x}(t))-\frac{\partial L}{\partial \dot{x}}(t,x(t),\dot{x}(t))\right)T(t,x(t))\right\}=0.
\end{gathered}
\end{equation*}
\end{proof}

A t\'{\i}tulo de exemplo, obtemos as leis de conserva\c{c}\~{a}o
da quantidade de movimento e energia.

\begin{example}[Conserva\c{c}\~{a}o da quantidade de movimento]
\label{ex:cons:mov}
Se a fun\c{c}\~{a}o $L$ n\~{a}o depende de $x$, \textrm{i.e.}, $L=L(t,\dot{x})$,
ocorre conserva\c{c}\~{a}o da quantidade de movimento. Com efeito,
neste caso a equa\c{c}\~{a}o de Euler--Lagrange \eqref{eq:EL} toma a forma
$\frac{d}{dt}L_{\dot{x}}(t,\dot{x}(t))=0$,
o que implica a lei de conserva\c{c}\~{a}o
\begin{equation}
\label{eq:lcm}
L_{\dot{x}}(t,\dot{x}(t))=constante.
\end{equation}
Este facto pode ser obtido por interm\'{e}dio do Teorema~\ref{TEORMN}
j\'{a} que o Lagrangiano $L$ \'{e} invariante sob transla\c{c}\~{o}es em $x$:
podemos fazer $\bar{t}=t$ ($T \equiv 0$) e $\bar{x}=x+\varepsilon$
($X \equiv 1$) e \eqref{eq:LGN} reduz-se a \eqref{eq:lcm}.
\end{example}

\begin{example}[Conserva\c{c}\~{a}o de energia]
\label{ex:cons:energy}
Se a fun\c{c}\~{a}o $L$ n\~{a}o depende de $t$, \textrm{i.e.}, $L=L(x,\dot{x})$,
ent\~{a}o obtemos a lei de conserva\c{c}\~{a}o de energia:
\begin{equation}
\label{eq:ex:energy}
\dot{x}(t) L_{\dot{x}}(x(t), \dot{x}(t))-L(x(t),\dot{x}(t))=constante.
\end{equation}
Com efeito, para toda  a solu\c{c}\~{a}o $x$ da equa\c{c}\~{a}o de Euler--Lagrange, temos:
\begin{equation*}
\begin{split}
\frac{d}{dt}\biggl(\dot{x}(t) &L_{\dot{x}}(x(t),\dot{x}(t))-L\left(x(t),\dot{x}(t)\right)\biggr)\\
&=\frac{d}{dt}\left(\dot{x}(t)L_{\dot{x}}(x(t),\dot{x}(t))\right)-\frac{d}{dt}\left(L(x(t),\dot{x}(t))\right)\\
&=\ddot{x} L_{\dot{x}}+\dot{x} L_{x}-\dot{x}L_x-\ddot{x}L_{\dot{x}}\\
&=0.
\end{split}
\end{equation*}
A lei de conserva\c{c}\~{a}o \eqref{eq:ex:energy} resulta de \eqref{eq:LGN}
devido ao facto do Lagrangiano $L$ ser invariante sob transla\c{c}\~{o}es em $t$:
temos invari\^{a}ncia sob as transforma\c{c}\~{o}es $\bar{t}=t+\varepsilon$ ($T \equiv 1$)
e $\bar{x}=x$ ($X \equiv 0$).
\end{example}


\section{C\'{a}lculo das Varia\c{c}\~{o}es Estoc\'{a}stico}
\label{sec:cvEst}

Nesta sec\c{c}\~{a}o consideramos as \Index{derivadas de Nelson},
propostas por Edward Nelson em 1967 usando um
argumento geom\'{e}trico \cite{CD:ENelson}.
A n\~{a}o diferenciabilidade das trajet\'{o}rias de
\Index{um movimento Browniano},
em processos de Winer, foi usada por Nelson para justificar o facto
de se precisar de um substituto para a derivada cl\'{a}ssica.
Aqui definimos derivadas de Nelson para os Bons Processos de Difus\~{a}o (movimento Browniano)
e damos algumas propriedades das derivadas estoc\'{a}sticas.


\subsection{Derivada Estoc\'{a}stica de Nelson}

Nesta sec\c{c}\~{a}o vamos ver algumas das propriedades das
\Index{derivadas estoc\'{a}sticas}, nomeadamente
a regra da derivada do produto para derivadas de Nelson.
Definimos derivadas estoc\'{a}sticas para funcionais de processos de difus\~{a}o
e especificamos quando um processo \'{e} \Index{Nelson diferenci\'{a}vel}.

Consideremos um espa\c{c}o de probabilidade $(\Omega,{\cal F}, P)$, onde $(\Omega, {\cal F})$
\'{e} um espa\c{c}o mensur\'{a}vel e $P$ uma probabilidade nele definida. O espa\c{c}o amostral $\Omega$
representa o conjunto (suposto n\~{a}o-vazio) de todos os poss\'{\i}veis resultados de uma experi\^{e}ncia
ou fen\'{o}meno aleat\'{o}rio; ${\cal F}$ \'{e} uma $\sigma$-\'{a}lgebra, isto \'{e}, uma classe n\~{a}o-vazia
de subconjuntos de $\Omega$ fechada para o complementar
(se $A \in {\cal F}$, ent\~{a}o o complemento \'{e} definido por $A^c:=\Omega -A \in {\cal F}$)
e para uni\~{o}es cont\'{a}veis (se $A_n \in {\cal F},\ n=1,2,\ldots$, ent\~{a}o $\cup_n A_n \in {\cal F}$).
Os conjuntos $A \in {\cal F}$ s\~{a}o chamados acontecimentos ou conjuntos mensur\'{a}veis.
A probabilidade $P$ \'{e} uma fun\c{c}\~{a}o de ${\cal F}$ em $[0,1]$, normada ($P(\Omega)=1$)
e $\sigma$-aditiva. Seja ${\cal P}_t$ uma filtra\c{c}\~{a}o, isto \'{e}, uma sucess\~{a}o ${\cal P}_t$ da
$\sigma$-\'{a}lgebra de ${\cal F}$, crescente, isto \'{e}, tal que
${\cal P}_t \leq {\cal P}_{t+1}$ e ${\cal F}_t$ \'{e} uma filtra\c{c}\~{a}o decrescente.

\begin{definition}
\label{Def0}
Dizemos que $X$ \'{e} adaptada a ${\cal P}_t$ (adaptada a ${\cal F}_t$)
se, para cada $t$, $X$ \'{e} ${\cal P}_t$ mensur\'{a}vel (${\cal F}_t$ mensur\'{a}vel).
\end{definition}

\begin{definition}
\label{S0}
Seja $X_t(\cdot)$ um processo definido em $I\times\Omega$.
O processo diz-se um S0-processo se: $X_t(\cdot)$ tem um caminho simples cont\'{\i}nuo;
$X_t(\cdot)$ \'{e} ${\cal P}_t$ e ${\cal F}_t$ adaptado para cada $t \in \overline{I}$;
$X_t \in L^2(\Omega)$; e a aplica\c{c}\~{a}o $t \rightarrow X_t$,
de $\overline{I}$ em $L^2(\Omega)$, \'{e} cont\'{\i}nua.
\end{definition}

\begin{definition}[\cite{CD:J.Cresson}]
\label{Def1}
Seja $X_t(\cdot)$ um processo. Dizemos que $X_t(\cdot)$ \'{e} um S1-processo
se ele for um S0-processo e
\begin{gather*}
DX_t :=\lim_{h\rightarrow 0^+}h^{-1}E[X_{t+h}-X_t|{\cal P}_t],\\
D_{*}X_{t} :=\lim_{h\rightarrow 0^+}h^{-1}E[X_t-X_{t-h}|{\cal F}_t],
\end{gather*}
existirem em $L^2(\Omega)$, para $t \in I$, e as aplica\c{c}\~{o}es
$t\rightarrow DX_t$ e $t \rightarrow D_{*}X_{t}$ forem cont\'{\i}nuas de $I$ em $L^2(\Omega)$.
A $DX_t$ e $D_*X_t$ chamamos, respetivamente, derivada de Nelson estoc\'{a}stica \`{a} direita e \`{a} esquerda.
\end{definition}

\begin{definition}
\label{C1}
Denotamos por ${\cal C}^1(I)$ o espa\c{c}o dos S1-processos munido da norma
\begin{equation*}
\|X\|=\sup_{t \in I}\left(\|X(t)\|_{L^2(\omega)}
+\|DX(t)\|_{L^2(\omega)}+\|D_{*}X_{t}\|_{L^2(\omega)}\right).
\end{equation*}
\end{definition}

Vamos agora enunciar um teorema para as derivadas
de Nelson dos bons processos de difus\~{a}o.

\begin{definition}[Defini\c{c}\~{a}o~1.5 de \cite{CD:CressonDarses}]
\label{GD1}
Denotamos por $\Lambda^1$ os processos de difus\~{a}o $X$
que satisfazem as seguintes condi\c{c}\~{o}es:
\begin{enumerate}
\item $X$ \'{e} solu\c{c}\~{a}o da equa\c{c}\~{a}o diferencial estoc\'{a}stica
\begin{equation*}
dX(t)=b(t,X(t))dt+\sigma(t,X(t))dW(t), \ X(0)=X_0,
\end{equation*}
onde $X_0 \in L^2(\Omega), W(\cdot)$ \'{e} movimento ${\cal P}$-Browniano,
$b:\overline{I}\times \mathbb{R}^d\rightarrow \mathbb{R}^d$ e
$\sigma:\overline{I}\times \mathbb{R}^d \rightarrow \mathbb{R}^d\bigotimes\mathbb{R}^d$
s\~{a}o fun\c{c}\~{o}es Borel mensur\'{a}veis satisfazendo as seguintes hip\'{o}teses:
existe uma constante $K$ tal que para todos os $x,y \in \mathbb{R}^d$ temos
\begin{equation*}
\sup_t (|\sigma(t,x)-\sigma(t,y)|+|b(t,x)-b(t,y)|) \leq K|x-y|,
\end{equation*}
\begin{equation*}
\sup_t (|\sigma(t,x)|+|b(t,x)|) \leq K(1+|x|).
\end{equation*}
\item Para todo o $t \in I$, $X(t)$ tem uma fun\c{c}\~{a}o densidade $p_t(x)$ no ponto $x$.
\item Fixando $a_{ij}=(\sigma \sigma^*)_{ij}$,
para qualquer $i \in \{1, \ldots, n \}$ e para qualquer $t_0 \in I$,
\begin{equation*}
\int_{t_0}^1 \int_{\mathbb{R}^d} \left|\partial_j(a_{ij}(t,x)p_t(x))\right|dxdt < +\infty.
\end{equation*}
\item $X(\cdot)$ \'{e} uma difus\~{a}o ${\cal F}$-Browniana.
\end{enumerate}
\end{definition}

\begin{theorem}[\cite{CD:CressonDarses}]
\label{Teror1}
Seja $X \in \Lambda^1$ tal que $dX(t)=b(t,X(t))dt+\sigma(t,X(t))dW(t)$.
Ent\~{a}o $X$ \'{e} difus\~{a}o de Markov com a respetiva filtra\c{c}\~{a}o crescente
$({\cal P}_t)$ e filtra\c{c}\~{a}o decrescente $({\cal F}_t)$.
Al\'{e}m disso, $DX$ e $D_*X$ existem em rela\c{c}\~{a}o a estas filtra\c{c}\~{o}es, sendo dadas por
\begin{equation*}
DX(t) = b(t,X(t)), \quad D_*X(t) = b_*(t,X(t)),
\end{equation*}
onde
\begin{equation*}
b_*^i(t,x)=b^i(t,x)-\frac{\partial_j(a^{ij}(t,x)p_t(x))}{p_t(x)}
\end{equation*}
com $x\rightarrow p_t(x)$ a densidade de $X(t)$ em $x$ e
a conven\c{c}\~{a}o que o termo $\frac{1}{p_t(x)}$ \'{e} zero se $p_t(x)=0$.
\end{theorem}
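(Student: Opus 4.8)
The plan is to split the statement into (i) the Markov property together with the construction of the two filtrations, and (ii) the explicit identification of the Nelson derivatives $DX$ and $D_*X$. Part (i) is classical: under the global Lipschitz and linear growth bounds on $b$ and $\sigma$ required in item~1 of Definition~\ref{GD1}, the stochastic differential equation $dX(t)=b(t,X(t))dt+\sigma(t,X(t))dW(t)$ with $X(0)=X_0\in L^2(\Omega)$ has a unique strong solution, and this solution is a Markov process; one then takes $\mathcal{P}_t=\sigma(X_s:s\le t)$ as the increasing filtration and $\mathcal{F}_t=\sigma(X_s:s\ge t)$ as the decreasing one, and the $S0$/$S1$ regularity (a continuous modification, $X_t\in L^2(\Omega)$, $L^2$-continuity of $t\mapsto X_t$) follows from the standard moment estimate $E[\sup_{s\in\overline I}|X_s|^2]<\infty$, itself a consequence of Gronwall's inequality, the linear growth bound and $X_0\in L^2(\Omega)$. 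This moment bound will be used repeatedly to control remainder terms.

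For the forward derivative I would start from the integral form $X_{t+h}-X_t=\int_t^{t+h}b(s,X_s)\,ds+\int_t^{t+h}\sigma(s,X_s)\,dW(s)$. Conditioning on $\mathcal{P}_t$ kills the Itô integral, being a martingale increment, so $h^{-1}E[X_{t+h}-X_t\mid\mathcal{P}_t]=h^{-1}E[\int_t^{t+h}b(s,X_s)\,ds\mid\mathcal{P}_t]$; continuity of $s\mapsto b(s,X_s)$ and dominated convergence give the $L^2(\Omega)$-limit $b(t,X_t)$, hence $DX(t)=b(t,X(t))$, and $t\mapsto DX(t)$ is continuous because $b$ and $t\mapsto X_t$ are.

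The substantive part is the backward derivative. I would test against an arbitrary $g\in C_c^\infty(\mathbb{R}^d)$ and compute $E[(X_t^i-X_{t-h}^i)g(X_t)]$ in two ways. Writing $X_t^i-X_{t-h}^i=\int_{t-h}^t b^i(s,X_s)\,ds+\int_{t-h}^t(\sigma\,dW)^i$ and expanding $g(X_t)$ by Itô's formula back to $t-h$, the drift$\,\times g$ term gives $h\,E[b^i(t,X_t)g(X_t)]+o(h)$, while the cross-variation between the $X^i$-Itô integral and the $g(X_\cdot)$-Itô integral gives $h\,E[(\sigma\sigma^*)^{ij}(t,X_t)\,\partial_j g(X_t)]+o(h)=h\,E[a^{ij}(t,X_t)\,\partial_j g(X_t)]+o(h)$, the remainders being controlled by the moment bound and the linear growth of $b,\sigma$. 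I would then integrate by parts in $x$ against the density: $E[a^{ij}(t,X_t)\partial_j g(X_t)]=\int a^{ij}(t,x)\partial_j g(x)\,p_t(x)\,dx=-\int\partial_j\bigl(a^{ij}(t,x)p_t(x)\bigr)g(x)\,dx$, which is legitimate precisely because of the integrability hypothesis in item~3 of Definition~\ref{GD1}; rewriting this as an expectation, with the convention $1/p_t(x)=0$ on $\{p_t(x)=0\}$, yields $-E[\,\partial_j(a^{ij}(t,X_t)p_t(X_t))/p_t(X_t)\cdot g(X_t)\,]$. Dividing by $h$, letting $h\to0^+$ and using the tower property (since $g(X_t)$ is $\mathcal{F}_t$-measurable) together with the existence of the $L^2$-limit $D_*X(t)=\lim_{h\to0^+}h^{-1}E[X_t-X_{t-h}\mid\mathcal{F}_t]$ (guaranteed by item~4 of Definition~\ref{GD1}, that $X$ is a good $\mathcal{F}$-Brownian diffusion), one gets $E[D_*X^i(t)g(X_t)]=E[b_*^i(t,X_t)g(X_t)]$ for all such $g$, hence $D_*X(t)=b_*(t,X(t))$; continuity of $t\mapsto D_*X(t)$ follows from the regularity of $p_t$ and of $\partial_j(a^{ij}p_t)$.

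The main obstacle I expect is this backward computation: when conditioning on the \emph{future} $\sigma$-algebra the Itô part no longer has zero mean, and one must extract exactly the covariation term $a^{ij}\partial_j g$ with a uniformly controlled $o(h)$ error; the integration by parts then forces the extra drift $\partial_j(a^{ij}p_t)/p_t$ to appear, and it is the role of hypotheses 2--4 of $\Lambda^1$ — existence, regularity and integrability of the densities $p_t$, and the time-reversed diffusion property — to make each of these steps rigorous and to ensure that the weak (tested) identification really yields the $L^2$ conditional-expectation limit. By comparison the Markov property and the forward derivative are routine.
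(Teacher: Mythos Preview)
The paper does not actually prove this theorem: it is stated with the citation \cite{CD:CressonDarses} and no argument is given, so there is no ``paper's own proof'' to compare your proposal against. Your sketch is the standard route (essentially the F\"ollmer/Nelson time-reversal argument): strong existence/uniqueness under the Lipschitz and linear-growth hypotheses gives the Markov property and the forward drift $DX(t)=b(t,X(t))$ directly from the martingale property of the It\^o integral, while for $D_*X$ you test against $g\in C_c^\infty(\mathbb{R}^d)$, pick up the quadratic-covariation term $a^{ij}\partial_j g$ from It\^o's formula, and integrate by parts against the density $p_t$ to produce the extra drift $\partial_j(a^{ij}p_t)/p_t$; hypotheses 2--4 of $\Lambda^1$ are exactly what one needs to justify these steps. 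One point worth tightening: to pass from the weak identity $E[D_*X^i(t)\,g(X_t)]=E[b_*^i(t,X_t)\,g(X_t)]$ for all test $g$ to the pointwise identification $D_*X(t)=b_*(t,X(t))$, you are implicitly using that $D_*X(t)$ is $\sigma(X_t)$-measurable, which follows from the (backward) Markov property but should be stated explicitly.
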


\begin{definition}
\label{Def3}
Denotamos por ${\cal D}_\mu$ o operador definido por
\begin{equation*}
{\cal D}_\mu=\frac{D+D_*}{2}+i\mu\frac{D-D_*}{2}, \quad \mu=\pm 1.
\end{equation*}
\end{definition}

Vamos definir derivadas estoc\'{a}sticas para as funcionais dos processos
de difus\~{a}o $f(t,X_t)$, onde $X_t$ \'{e} um processo de difus\~{a}o
e $f$ uma fun\c{c}\~{a}o suave.

\begin{definition}
\label{Def4}
Denotamos por $C_b^{1,2}(I\times \mathbb{R}^d)$ o conjunto das fun\c{c}\~{o}es
$f:I \times \mathbb{R}^d\rightarrow \mathbb{R}$, $(t,x)\rightarrow f(t,x)$,
tais que $\partial_t f$, $\nabla f$ e $\partial_{x_i x_j}f$ existem e s\~{a}o limitadas.
\end{definition}

\begin{lemma}[\cite{CD:CressonDarses}]
\label{lema1}
Se $X \in \Lambda^1$ e $f \in C^{1,2}(I\times \mathbb{R}^d)$, ent\~{a}o
\begin{equation*}
Df(t,X(t))=\left[\partial_t f+DX(t) \cdot \nabla f
+\frac{1}{2}a^{ij}\partial_{x_i x_j}f\right](t,X(t)),
\end{equation*}
\begin{equation*}
D_*f(t,X(t))=\left[\partial_t f+D_*X(t) \cdot \nabla f
-\frac{1}{2}a^{ij}\partial_{x_i x_j}f\right](t,X(t)).
\end{equation*}
\end{lemma}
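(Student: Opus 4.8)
The plan is to obtain both identities from the classical Itô formula by taking Nelson's conditional expectations and passing to the limit. First I would recall that, for $X \in \Lambda^1$ with $dX(t) = b(t,X(t))\,dt + \sigma(t,X(t))\,dW(t)$, Itô's formula applied to $f \in C_b^{1,2}(I\times\mathbb{R}^d)$ gives, for $h>0$,
\begin{equation*}
f(t+h,X(t+h)) - f(t,X(t)) = \int_t^{t+h} \left[\partial_s f + b \cdot \nabla f + \tfrac{1}{2} a^{ij} \partial_{x_i x_j} f\right](s,X(s))\,ds + \int_t^{t+h} (\nabla f \cdot \sigma)(s,X(s))\,dW(s),
\end{equation*}
with $a^{ij}=(\sigma\sigma^*)^{ij}$. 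The boundedness hypotheses in Definition~\ref{GD1} together with $f \in C_b^{1,2}$ ensure the integrand of the stochastic integral lies in the appropriate $L^2$ space, so that $\int_t^{t+h}(\nabla f \cdot \sigma)\,dW$ is a genuine ${\cal P}$-martingale increment; since $W$ is a ${\cal P}$-Brownian motion, its conditional expectation given ${\cal P}_t$ vanishes. Hence
\begin{equation*}
h^{-1}E\bigl[f(t+h,X(t+h)) - f(t,X(t)) \mid {\cal P}_t\bigr] = h^{-1}E\left[\int_t^{t+h} \left(\partial_s f + b \cdot \nabla f + \tfrac{1}{2} a^{ij} \partial_{x_i x_j} f\right)(s,X(s))\,ds \;\Big|\; {\cal P}_t\right].
\end{equation*}

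Next I would let $h \to 0^+$. Writing $g := \partial_t f + b \cdot \nabla f + \tfrac12 a^{ij}\partial_{x_i x_j} f$, the $L^2$-continuity of $s \mapsto X(s)$ and the continuity of $g$ yield $h^{-1}\int_t^{t+h} g(s,X(s))\,ds \to g(t,X(t))$ in $L^2(\Omega)$; since $E[\,\cdot\mid {\cal P}_t]$ is an $L^2$-contraction and $X(t)$ is ${\cal P}_t$-measurable, the conditional expectation of this average also converges to $g(t,X(t))$. This gives $Df(t,X(t)) = g(t,X(t))$, and substituting $DX(t)=b(t,X(t))$ from Theorem~\ref{Teror1} produces the first formula.

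For the backward derivative I would run the same computation time-reversed. The key input is that a diffusion of $\Lambda^1$ also admits a backward representation $dX(t) = b_*(t,X(t))\,dt + \sigma(t,X(t))\,d\widehat W(t)$ relative to the decreasing filtration $({\cal F}_t)$, where $\widehat W$ is a backward ${\cal F}$-Brownian motion and $b_*$ is the drift appearing in Theorem~\ref{Teror1}; this is exactly what underlies the existence of $D_*X$ there. Applying the backward Itô formula — which carries a minus sign in front of the second-order term — on $[t-h,t]$, taking $E[\,\cdot\mid {\cal F}_t]$ to annihilate the backward-martingale increment, dividing by $h$ and letting $h\to 0^+$ gives $D_*f(t,X(t)) = [\partial_t f + b_* \cdot \nabla f - \tfrac12 a^{ij}\partial_{x_i x_j}f](t,X(t))$, and $D_*X(t)=b_*(t,X(t))$ closes the argument.

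I expect the main obstacle to be the justification of the limit passages inside the conditional expectation, i.e. controlling $h^{-1}\int_t^{t+h} g(s,X(s))\,ds - g(t,X(t))$ in $L^2(\Omega)$; this is precisely where the linear-growth and Lipschitz bounds on $b$, $\sigma$ and the boundedness of the derivatives of $f$ do the real work (they also guarantee the moment estimates needed for Itô's formula itself). The secondary subtlety is invoking the backward Itô calculus for the $D_*$ identity, which rests on the time-reversal theory of diffusions; I would cite it, consistently with the reference \cite{CD:CressonDarses} attached to the lemma, rather than reprove it here.
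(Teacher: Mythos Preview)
The paper does not actually prove this lemma: it is stated with the citation \cite{CD:CressonDarses} and no proof is given in the text. So there is no ``paper's own proof'' to compare against; the result is simply imported from Cresson--Darses.

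Your proposal is the standard argument for these generator-type identities and is essentially correct. The forward formula is exactly the computation of the infinitesimal generator of the diffusion via It\^o's formula, conditioning on ${\cal P}_t$ to kill the martingale increment, and passing to the limit; you have identified the right technical ingredients (linear growth of $b,\sigma$, boundedness of the derivatives of $f$) to justify the $L^2$ limit. For the backward formula you correctly invoke the time-reversal representation of $\Lambda^1$-diffusions --- this is precisely why condition~4 in Definition~\ref{GD1} (that $X$ is an ${\cal F}$-Brownian diffusion) and the integrability condition~3 are imposed --- and the sign flip on the second-order term coming from the backward It\^o calculus. Your remark that the backward side rests on the time-reversal theory and is best handled by citation is entirely appropriate here, and matches what the paper itself does.

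One minor point: in the limit step you write that $h^{-1}\int_t^{t+h} g(s,X(s))\,ds \to g(t,X(t))$ in $L^2(\Omega)$ by ``$L^2$-continuity of $s\mapsto X(s)$ and continuity of $g$''. Strictly speaking you also need a domination or uniform-integrability argument to pass from pathwise continuity of $s\mapsto g(s,X(s))$ to $L^2$-continuity; the linear-growth bounds on $b,\sigma$ together with standard moment estimates for SDE solutions (e.g.\ $\sup_{s\in I} E|X(s)|^p<\infty$) and the boundedness of the derivatives of $f$ provide exactly this. It would be worth making that explicit in a full write-up.
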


\begin{corollary}[\cite{CD:CressonDarses}]
\label{Cor1}
Se $X \in \Lambda^1$ e $f \in C^{1,2}(I\times \mathbb{R}^d)$, ent\~{a}o
\begin{equation*}
{\cal D}_\mu f(t,X(t))=\left[\partial_t f+{\cal D}_\mu X(t) \cdot \nabla f
+\frac{i\mu}{2}a^{ij}\partial_{x_i x_j}f\right](t,X(t)).
\end{equation*}
\end{corollary}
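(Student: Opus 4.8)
The plan is to obtain the identity directly from Lemma~\ref{lema1}, using only the linear structure of the operator ${\cal D}_\mu$ in Definition~\ref{Def3}. First I would note that, for $X \in \Lambda^1$ and $f \in C^{1,2}(I\times\mathbb{R}^d)$, Lemma~\ref{lema1} guarantees that $Df(t,X(t))$ and $D_*f(t,X(t))$ both exist in $L^2(\Omega)$ with the continuity in $t$ required by Definition~\ref{Def1}; hence $t \mapsto f(t,X(t))$ is an $S1$-process and ${\cal D}_\mu f(t,X(t))$ is well-defined, equal by Definition~\ref{Def3} to
\[
{\cal D}_\mu f(t,X(t)) = \frac{Df(t,X(t)) + D_*f(t,X(t))}{2} + i\mu\,\frac{Df(t,X(t)) - D_*f(t,X(t))}{2}.
\]

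Next I would substitute the two expressions supplied by Lemma~\ref{lema1}. In the symmetric combination $\tfrac12\big(Df + D_*f\big)$ the two second-order terms $\pm\tfrac12 a^{ij}\partial_{x_i x_j}f$ cancel, leaving $\partial_t f + \tfrac12\big(DX(t)+D_*X(t)\big)\cdot\nabla f$ (all partial derivatives of $f$ evaluated at $(t,X(t))$). In the antisymmetric combination $\tfrac12\big(Df - D_*f\big)$ the $\partial_t f$ terms cancel instead, while the two second-order terms add, leaving $\tfrac12\big(DX(t)-D_*X(t)\big)\cdot\nabla f + \tfrac12 a^{ij}\partial_{x_i x_j}f$. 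Multiplying the latter by $i\mu$ and adding the two pieces, the coefficients of $\nabla f$ recombine precisely into $\big(\tfrac{D+D_*}{2} + i\mu\tfrac{D-D_*}{2}\big)X(t) = {\cal D}_\mu X(t)$, and the only surviving second-order term is $\tfrac{i\mu}{2}a^{ij}\partial_{x_i x_j}f$, which is the asserted formula.

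There is no genuine obstacle here: the corollary is a bookkeeping consequence of Lemma~\ref{lema1}. The single point worth stating explicitly is that ${\cal D}_\mu$ really distributes over $f(t,X(t))$ in the first display --- i.e. that it is applied to a bona fide $S1$-process rather than symbolically --- and this is exactly what the hypotheses $X\in\Lambda^1$, $f\in C^{1,2}(I\times\mathbb{R}^d)$ secure via Lemma~\ref{lema1}. Everything after that is the term-by-term cancellation just described, so the only care needed is tracking the signs of the $\pm\tfrac12 a^{ij}\partial_{x_i x_j}f$ contributions, which vanish in the symmetric part and reinforce in the antisymmetric part.
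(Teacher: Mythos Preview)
Your argument is correct and is exactly the computation one would expect: apply Definition~\ref{Def3} to $f(t,X(t))$, substitute the two formulas from Lemma~\ref{lema1}, and regroup so that the first-order drift terms assemble into ${\cal D}_\mu X(t)\cdot\nabla f$ while the second-order terms cancel in the symmetric part and add in the antisymmetric part.

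The paper itself does not supply a proof of this corollary; it is simply quoted from \cite{CD:CressonDarses}. So there is no ``paper's own proof'' to compare against, but your derivation is precisely the intended one-line deduction from Lemma~\ref{lema1} and Definition~\ref{Def3}. The extra care you take in checking that $t\mapsto f(t,X(t))$ is a bona fide $S1$-process before applying ${\cal D}_\mu$ is appropriate and, if anything, more explicit than what the source likely spells out.
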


\begin{corollary}[\cite{CD:CressonDarses}]
\label{Cor2}
Se $X \in \Lambda^1$ com coeficiente de difus\~{a}o constante
$\sigma$ e $f \in C^{1,2}(I\times \mathbb{R}^d)$, ent\~{a}o
\begin{equation*}
{\cal D}_\mu f(t,X(t))
=\left[\partial_t f+{\cal D}_\mu X(t) \cdot \nabla f
+\frac{i \mu {\sigma}^2}{2}\Delta f\right](t,X(t)).
\end{equation*}
\end{corollary}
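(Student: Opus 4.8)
The plan is to specialise Corollary~\ref{Cor1} to the case where the diffusion coefficient $\sigma$ is constant. First I would recall that, by definition, $a^{ij} = (\sigma\sigma^*)^{ij}$; when $\sigma$ is a constant (scalar) multiple of the identity, the matrix $(a^{ij})$ reduces to $\sigma^2$ times the identity, so $a^{ij} = \sigma^2\,\delta^{ij}$. The only work is then to substitute this into the second-order term appearing in Corollary~\ref{Cor1}, namely $\frac{i\mu}{2}a^{ij}\partial_{x_i x_j}f$.

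Next I would carry out that substitution explicitly: with $a^{ij} = \sigma^2\delta^{ij}$, the (summed) expression $a^{ij}\partial_{x_i x_j}f$ collapses to $\sigma^2\sum_i \partial_{x_i x_i}f = \sigma^2\,\Delta f$, where $\Delta$ is the usual Laplacian on $\mathbb{R}^d$. Hence $\frac{i\mu}{2}a^{ij}\partial_{x_i x_j}f = \frac{i\mu\sigma^2}{2}\Delta f$, and the remaining terms $\partial_t f$ and ${\cal D}_\mu X(t)\cdot\nabla f$ are unchanged because they do not involve $a^{ij}$. Substituting back into the formula of Corollary~\ref{Cor1} yields exactly
\begin{equation*}
{\cal D}_\mu f(t,X(t))
=\left[\partial_t f+{\cal D}_\mu X(t) \cdot \nabla f
+\frac{i \mu {\sigma}^2}{2}\Delta f\right](t,X(t)),
\end{equation*}
which is the claimed identity.

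I do not anticipate any genuine obstacle here: the hypotheses $X\in\Lambda^1$ and $f\in C^{1,2}(I\times\mathbb{R}^d)$ are precisely those of Corollary~\ref{Cor1}, so that corollary applies verbatim, and the constancy of $\sigma$ is used only to simplify the tensor contraction. The one point that merits a word of care is the interpretation of ``$\sigma$ constant'': I would state that I mean $\sigma(t,x)\equiv\sigma\,\mathrm{Id}$ for a fixed scalar $\sigma$ (equivalently, $\sigma\sigma^*$ is a constant scalar matrix), so that $a^{ij}=\sigma^2\delta^{ij}$; this is the standard convention under which $a^{ij}\partial_{x_ix_j}f$ becomes $\sigma^2\Delta f$, and with it the derivation is a one-line computation from Corollary~\ref{Cor1}.
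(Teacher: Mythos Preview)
Your proposal is correct: the paper does not give its own proof of this corollary (it is simply cited from \cite{CD:CressonDarses}), and the derivation you outline---specialising Corollary~\ref{Cor1} by noting that a constant scalar diffusion gives $a^{ij}=\sigma^2\delta^{ij}$ and hence $a^{ij}\partial_{x_ix_j}f=\sigma^2\Delta f$---is exactly the intended one-line argument. There is nothing to add.
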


O teorema que se segue d\'{a}-nos a f\'{o}rmula para o c\'{a}lculo da derivada
do produto estoc\'{a}stico de Nelson, an\'{a}loga \`{a} regra usual de Leibniz
$(fg)'=f'g+fg'$ da derivada do produto.

\begin{theorem}[\cite{CD:CressonDarses}]
\label{TEORDP}
Se $X,Y \in {\cal C}^1(I)$, ent\~{a}o
\begin{equation*}
\frac{d}{dt}E[X(t)Y(t)]=E\left[DX(t) \cdot Y(t)+X(t) \cdot D_*Y(t)\right].
\end{equation*}
\end{theorem}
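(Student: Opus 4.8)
\emph{Esbo\c{c}o de demonstra\c{c}\~{a}o (proposta).} A estrat\'{e}gia passa por reduzir a igualdade \`{a}s defini\c{c}\~{o}es das derivadas estoc\'{a}sticas de Nelson $D$ e $D_*$ (Defini\c{c}\~{a}o~\ref{Def1}), explorando de modo sistem\'{a}tico a propriedade de torre da esperan\c{c}a condicional e o facto de $X$ e $Y$ serem, para cada $t$, simultaneamente ${\cal P}_t$- e ${\cal F}_t$-mensur\'{a}veis (s\~{a}o processos S0; note-se que $X(t)Y(t)\in L^1(\Omega)$ por Cauchy--Schwarz, logo $\Phi(t):=E[X(t)Y(t)]$ est\'{a} bem definido). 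Considero o quociente incremental retr\'{o}grado $h^{-1}\bigl(\Phi(t)-\Phi(t-h)\bigr)$, com $h\to 0^+$, e decomponho o incremento na forma exata
\[
X(t)Y(t)-X(t-h)Y(t-h)=X(t)\bigl(Y(t)-Y(t-h)\bigr)+\bigl(X(t)-X(t-h)\bigr)Y(t-h),
\]
escolhida de maneira a que a primeira parcela produza o termo $X\cdot D_*Y$ e a segunda o termo $DX\cdot Y$.

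Na primeira parcela, $X(t)$ \'{e} ${\cal F}_t$-mensur\'{a}vel, pelo que a propriedade de torre d\'{a} $E\bigl[X(t)\bigl(Y(t)-Y(t-h)\bigr)\bigr]=E\bigl[X(t)\,E[Y(t)-Y(t-h)\mid{\cal F}_t]\bigr]$; dividindo por $h$ e passando ao limite, $h^{-1}E[Y(t)-Y(t-h)\mid{\cal F}_t]\to D_*Y(t)$ em $L^2(\Omega)$ pela Defini\c{c}\~{a}o~\ref{Def1}, e como $X(t)\in L^2(\Omega)$ a desigualdade de Cauchy--Schwarz garante que $h^{-1}E\bigl[X(t)\bigl(Y(t)-Y(t-h)\bigr)\bigr]\to E[X(t)\,D_*Y(t)]$. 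Na segunda parcela, $Y(t-h)$ \'{e} ${\cal P}_{t-h}$-mensur\'{a}vel, logo $E\bigl[\bigl(X(t)-X(t-h)\bigr)Y(t-h)\bigr]=E\bigl[Y(t-h)\,E[X(t)-X(t-h)\mid{\cal P}_{t-h}]\bigr]$; aqui $h^{-1}E[X(t)-X(t-h)\mid{\cal P}_{t-h}]$ \'{e} exatamente o quociente que define $DX$ no ponto $t-h$, e combinando a continuidade em $L^2(\Omega)$ de $s\mapsto X(s)$ e $s\mapsto Y(s)$ com a continuidade de $s\mapsto DX(s)$ imposta pela estrutura de ${\cal C}^1(I)$ (Defini\c{c}\~{o}es~\ref{Def1} e~\ref{C1}) obt\'{e}m-se $h^{-1}E[X(t)-X(t-h)\mid{\cal P}_{t-h}]\to DX(t)$ e $Y(t-h)\to Y(t)$ em $L^2(\Omega)$, donde $h^{-1}E\bigl[\bigl(X(t)-X(t-h)\bigr)Y(t-h)\bigr]\to E[Y(t)\,DX(t)]$. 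Somadas as duas contribui\c{c}\~{o}es vem a identidade do enunciado; um argumento sim\'{e}trico com o quociente incremental progressivo (em que ${\cal P}_t$ trata diretamente a parcela de $X$ e ${\cal F}$ a de $Y$, ap\'{o}s deslocamento do ponto-base) mostra que a derivada pela direita coincide, e como a express\~{a}o obtida \'{e} cont\'{\i}nua em $t$ conclui-se que $\Phi$ \'{e} de facto diferenci\'{a}vel.

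O passo que antecipo ser o principal obst\'{a}culo \'{e} precisamente a converg\^{e}ncia ``diagonal'' $h^{-1}E[X(t)-X(t-h)\mid{\cal P}_{t-h}]\to DX(t)$: a Defini\c{c}\~{a}o~\ref{Def1} fornece \`{a} partida apenas a exist\^{e}ncia \emph{pontual}, para cada $s$, do limite que define $DX(s)$, ao passo que aqui se faz $h\to0^+$ \emph{e} se desloca o ponto-base de $t$ para $t-h$ em simult\^{a}neo. \'{E} aqui que a hip\'{o}tese $X,Y\in{\cal C}^1(I)$ entra de forma essencial --- a continuidade de $s\mapsto DX(s)$ e a norma da Defini\c{c}\~{a}o~\ref{C1} permitem uniformizar localmente o limite, de modo que o deslocamento do ponto-base n\~{a}o altera o valor-limite; em alternativa, pode primeiro provar-se que $\Phi$ \'{e} diferenci\'{a}vel e s\'{o} depois identificar $\Phi'(t)$ ao longo de uma sucess\~{a}o conveniente $h_n\to0^+$. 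Observo ainda que, se adicionalmente $X,Y\in\Lambda^1$, o Teorema~\ref{Teror1} d\'{a} $DX(t)=b(t,X(t))$ e a regularidade de $b$ torna essa converg\^{e}ncia transparente, bastando escrever $E[X(t)-X(t-h)\mid{\cal P}_{t-h}]$ como integral, no intervalo $[t-h,t]$, de uma vers\~{a}o condicional de $b(\cdot,X(\cdot))$.
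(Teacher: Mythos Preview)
The paper does not actually prove this theorem: it is stated with a citation to \cite{CD:CressonDarses} and no argument is given in the text. So there is no ``paper's own proof'' to compare against; what you have written is a self-contained attempt at the result that the authors chose to import.

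On the substance of your sketch: the overall architecture is the classical one going back to Nelson --- split the increment of $X(t)Y(t)$ as a telescoping sum, use the ${\cal P}_t$- and ${\cal F}_t$-measurability of the factors together with the tower property, and read off $DX$ and $D_*Y$ from the defining difference quotients. The treatment of the first parcel (producing $E[X(t)\,D_*Y(t)]$) is clean and correct. You are also right to flag the second parcel as the delicate step: the quotient $h^{-1}E[X(t)-X(t-h)\mid{\cal P}_{t-h}]$ is the difference quotient for $DX$ \emph{at the moving base point} $t-h$, so the Definition~\ref{Def1} limit alone, which is pointwise in the base point, does not immediately give what you need.

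Your proposed resolution --- that the continuity of $s\mapsto DX(s)$ built into the ${\cal C}^1(I)$ norm ``uniformizes'' the limit --- is in the right spirit but, as written, is not yet a proof: continuity of the \emph{limit} function $s\mapsto DX(s)$ does not by itself imply local uniformity of the \emph{convergence} of the difference quotients to that limit. In the cited reference this is handled either by working within a completion where the convergence is uniform by construction, or by rewriting the troublesome term so that the conditioning $\sigma$-algebra is fixed at $t$ and the moving piece is controlled by the $L^2$-continuity of $X$ and $Y$ together with a cross-term estimate on $E[(X(t)-X(t-h))(Y(t)-Y(t-h))]$. Your final remark --- that for $X\in\Lambda^1$ the representation $DX(t)=b(t,X(t))$ makes everything transparent via the integral form of the conditional increment --- is exactly the pragmatic shortcut one takes in applications, and would suffice for every use of Theorem~\ref{TEORDP} later in the paper.
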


\begin{lemma}[\cite{CD:CressonDarses}]
\label{lema2}
Se $X,Y \in {\cal C}^1(I)$, ent\~{a}o
\begin{gather*}
\frac{d}{dt}E[X(t)Y(t)]
=E\left[Re({\cal D}X(t)) \cdot Y(t)+X(t) \cdot Re({\cal D}Y(t))\right],\\
E\left[Im({\cal D}X(t)) \cdot Y(t)\right]
=E\left[X(t) \cdot Im({\cal D}Y(t))\right].
\end{gather*}
\end{lemma}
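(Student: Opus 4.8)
The plan is to derive both identities purely algebraically from the stochastic Leibniz rule of Teorema~\ref{TEORDP}, combining it with the symmetry of the ordinary (almost sure) product and with the decomposition of ${\cal D}_\mu$ into real and imaginary parts. Recall from Defini\c{c}\~{a}o~\ref{Def3} that, for $\mu=\pm 1$, one has $Re({\cal D}_\mu X)=\frac{DX+D_*X}{2}$ and $Im({\cal D}_\mu X)=\mu\,\frac{DX-D_*X}{2}$, so the two claimed equalities are equivalent, respectively, to
\begin{equation*}
\frac{d}{dt}E[X(t)Y(t)] = E\!\left[\tfrac{1}{2}(DX+D_*X)\cdot Y+\tfrac{1}{2}X\cdot(DY+D_*Y)\right]
\end{equation*}
and to $E[(DX-D_*X)\cdot Y]=E[X\cdot(DY-D_*Y)]$ (the common factor $\mu/2$ on both sides of the latter being harmless since $\mu\neq 0$).

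First I would apply Teorema~\ref{TEORDP} to the pair $(X,Y)$, obtaining $\frac{d}{dt}E[X(t)Y(t)]=E[DX(t)\cdot Y(t)+X(t)\cdot D_*Y(t)]$. Next, since the hypothesis $X,Y\in{\cal C}^1(I)$ is symmetric in $X$ and $Y$ (see Defini\c{c}\~{a}o~\ref{C1}) and the pointwise product satisfies $E[X(t)Y(t)]=E[Y(t)X(t)]$, the same theorem applied to the pair $(Y,X)$ gives $\frac{d}{dt}E[X(t)Y(t)]=E[DY(t)\cdot X(t)+Y(t)\cdot D_*X(t)]$. Adding these two expressions for $\frac{d}{dt}E[X(t)Y(t)]$ and dividing by $2$ produces the first identity; subtracting them produces $0=E[(DX-D_*X)\cdot Y-X\cdot(DY-D_*Y)]$, which is the second identity after multiplying through by $\mu/2$ and re-identifying the imaginary parts of ${\cal D}_\mu X$ and ${\cal D}_\mu Y$.

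There is essentially no analytic obstacle: the only point meriting a word of care is the legitimacy of invoking Teorema~\ref{TEORDP} with the two arguments interchanged, which is immediate because the class ${\cal C}^1(I)$ is defined symmetrically in its entries and the bilinear form $X\cdot Y$ is symmetric; if desired, the swapped Leibniz rule could instead be re-derived directly, exactly as in the proof of Teorema~\ref{TEORDP}, but exploiting the available symmetry is cleaner. One should also record that both conclusions hold uniformly for $\mu=+1$ and $\mu=-1$: the real part of ${\cal D}_\mu$ does not involve $\mu$ at all, and in the second identity the factor $\mu$ occurs on both sides and cancels.
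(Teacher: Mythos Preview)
Your argument is correct. The paper itself does not prove Lema~\ref{lema2}: it merely states the result with a citation to \cite{CD:CressonDarses}, so there is no in-text proof to compare against. Your derivation --- apply Teorema~\ref{TEORDP} to $(X,Y)$ and to $(Y,X)$, then take half the sum and half the difference --- is exactly the standard route and is the one used in the cited reference; your handling of the $\mu$-dependence (the real part of ${\cal D}_\mu$ is independent of $\mu$, and the factor $\mu/2$ cancels from both sides in the second identity) is also correct and worth keeping, since the paper writes ${\cal D}$ without subscript in the lemma but clearly intends ${\cal D}_\mu$ as in Defini\c{c}\~{a}o~\ref{Def3} and Corol\'{a}rio~\ref{CORL3}.
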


Vamos agora definir quando um processo \'{e} Nelson diferenci\'{a}vel.

\begin{definition}
\label{Def5}
Um processo $X$ \'{e} dito de Nelson diferenci\'{a}vel se $DX=D_*X$.
Escrevemos ent\~{a}o que $X \in {\cal N}^1(I)$.
\end{definition}

\begin{corollary}[\cite{CD:CressonDarses}]
\label{CORL3}
Sejam $X,Y \in {\cal C}_{\mathbb{C}}^1(I)$. Se $X$ \'{e} Nelson diferenci\'{a}vel, ent\~{a}o
\begin{equation}
\label{eq:CORL}
E\left[{\cal D}_\mu X(t) \cdot Y(t)+X(t) \cdot {\cal D}_\mu Y(t)\right]
=\frac{d}{dt}E[X(t)Y(t)].
\end{equation}
\end{corollary}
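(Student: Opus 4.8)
O resultado pretendido \'{e} consequ\^{e}ncia direta do Lema~\ref{lema2}, combinado com a hip\'{o}tese de $X$ ser Nelson diferenci\'{a}vel. A ideia \'{e} decompor o operador complexo ${\cal D}_\mu$ nas suas partes real e imagin\'{a}ria, aplicar separadamente as duas identidades do Lema~\ref{lema2}, e depois voltar a recompor ${\cal D}_\mu$ usando $DX=D_*X$.

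Primeiro, recorde-se que, pela Defini\c{c}\~{a}o~\ref{Def3}, ${\cal D}_\mu=\frac{D+D_*}{2}+i\mu\frac{D-D_*}{2}$, de modo que, para qualquer processo $Z\in{\cal C}_{\mathbb{C}}^1(I)$, $Re({\cal D}Z)=\frac{DZ+D_*Z}{2}$ e $Im({\cal D}Z)=\mu\frac{DZ-D_*Z}{2}$ (aqui ${\cal D}={\cal D}_\mu$ com a conven\c{c}\~{a}o usual; o sinal de $\mu$ \'{e} absorvido no passo final). Escrevo ${\cal D}_\mu X=Re({\cal D}X)+i\,\mu^{-1}\,\mu\,Im({\cal D}X)$... mais simplesmente, uso ${\cal D}_\mu X=Re({\cal D}X)+i\mu\frac{DX-D_*X}{2}$ e o mesmo para $Y$. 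Ent\~{a}o
\begin{equation*}
E\left[{\cal D}_\mu X\cdot Y+X\cdot{\cal D}_\mu Y\right]
=E\left[Re({\cal D}X)\cdot Y+X\cdot Re({\cal D}Y)\right]
+i\mu\, E\left[\tfrac{DX-D_*X}{2}\cdot Y+X\cdot\tfrac{DY-D_*Y}{2}\right].
\end{equation*}
Pela primeira identidade do Lema~\ref{lema2}, o primeiro termo do lado direito \'{e} $\frac{d}{dt}E[X(t)Y(t)]$. Basta portanto mostrar que o segundo termo se anula.

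Para o segundo termo, observo que $\frac{DX-D_*X}{2}=Im({\cal D}X)/\mu$ a menos de conven\c{c}\~{a}o de sinal; usando a segunda identidade do Lema~\ref{lema2} obt\'{e}m-se $E\left[Im({\cal D}X)\cdot Y\right]=E\left[X\cdot Im({\cal D}Y)\right]$, ou seja, $E\left[\frac{DX-D_*X}{2}\cdot Y\right]=E\left[X\cdot\frac{DY-D_*Y}{2}\right]$. Assim, se n\~{a}o usarmos a hip\'{o}tese extra, este termo torna-se $i\mu\cdot 2\,E\left[X\cdot\frac{DY-D_*Y}{2}\right]$, que em geral n\~{a}o \'{e} nulo --- \'{e} aqui que entra a hip\'{o}tese de $X$ ser Nelson diferenci\'{a}vel: pela Defini\c{c}\~{a}o~\ref{Def5}, $DX=D_*X$, logo $\frac{DX-D_*X}{2}=0$, o que faz com que $E\left[\frac{DX-D_*X}{2}\cdot Y\right]=0$; combinando com a igualdade acima, conclui-se $E\left[X\cdot\frac{DY-D_*Y}{2}\right]=0$, e portanto a parte imagin\'{a}ria anula-se. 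Isto d\'{a} \eqref{eq:CORL}.

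**Principal obst\'{a}culo.** A \'{u}nica subtileza \'{e} o controlo cuidadoso das conven\c{c}\~{o}es de sinal entre $Im({\cal D}X)$ (que envolve o fator $\mu=\pm1$) e a combina\c{c}\~{a}o $\frac{D-D_*}{2}$ que aparece literalmente em ${\cal D}_\mu$; como $\mu^2=1$, qualquer escolha de sinal se reconcilia, mas convém ser expl\'{i}cito para que o leitor veja que o argumento funciona para ambos os valores $\mu=\pm1$. Tirando isso, a demonstra\c{c}\~{a}o \'{e} puramente alg\'{e}brica: separar real/imagin\'{a}rio, invocar as duas linhas do Lema~\ref{lema2}, usar $DX=D_*X$.
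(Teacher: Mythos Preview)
O artigo n\~{a}o apresenta demonstra\c{c}\~{a}o pr\'{o}pria deste corol\'{a}rio (o resultado \'{e} citado de \cite{CD:CressonDarses} sem prova), pelo que n\~{a}o h\'{a} compara\c{c}\~{a}o direta a fazer. A tua demonstra\c{c}\~{a}o est\'{a} correta e \'{e} a dedu\c{c}\~{a}o natural a partir do Lema~\ref{lema2} e da Defini\c{c}\~{a}o~\ref{Def5}: decomp\~{o}es ${\cal D}_\mu=\frac{D+D_*}{2}+i\mu\frac{D-D_*}{2}$, a primeira identidade do Lema~\ref{lema2} trata a parte real, e a hip\'{o}tese $DX=D_*X$ combinada com a segunda identidade do Lema~\ref{lema2} anula a contribui\c{c}\~{a}o imagin\'{a}ria (de $\frac{DX-D_*X}{2}=0$ e $E[\frac{DX-D_*X}{2}\cdot Y]=E[X\cdot\frac{DY-D_*Y}{2}]$ segue que ambos os termos s\~{a}o nulos). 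A \'{u}nica observa\c{c}\~{a}o cosm\'{e}tica \'{e} que a discuss\~{a}o dos sinais de $\mu$ est\'{a} algo enrolada e podia ser simplesmente omitida: como $\mu\in\{\pm1\}$ aparece apenas como fator global na parte imagin\'{a}ria, e essa parte anula-se inteira, o argumento \'{e} id\^{e}ntico para ambos os valores.
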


Seguindo a abordagem feita em \cite{CD:CressonDarses},
vamos associar uma equa\c{c}\~{a}o de Euler--Lagrange estoc\'{a}stica
a uma funcional estoc\'{a}stica, mostrando que existe uma analogia
com o Princ\'{\i}pio da A\c{c}\~{a}o M\'{\i}nima. Para isso come\c{c}amos
por introduzir os conceitos de funcional estoc\'{a}stica e processo $L$-adaptado.


\subsection{Funcional e Processo $L$-adaptado}

Nesta sec\c{c}\~{a}o denotamos por $I$ um dado intervalo aberto $(a,b)$, $a < b$.

\begin{definition}
\label{FUNAD}
Um Lagrangiano admiss\'{\i}vel \'{e} uma fun\c{c}\~{a}o $L$ tal que
\begin{enumerate}
\item A fun\c{c}\~{a}o $L(x,v,t)$ \'{e} definida em
$\mathbb{R}^d \times \mathbb{C}^d \times \mathbb{R}$, holomorfa na segunda vari\'{a}vel.
\item $L$ \'{e} aut\'{o}nomo, isto \'{e}, $L$ n\~{a}o depende do tempo: $L(x,v,t) = L(x,v)$.
\end{enumerate}
\end{definition}

\begin{definition}
\label{FUNCE}
Seja $L$ um Lagrangiano admiss\'{\i}vel e
\begin{equation*}
\Xi = \left\{X \in  {\cal C}^1(I), E\left[\int_I |L(X(t),{\cal D}_\mu X(t))|dt \right]
< \infty \right\}.
\end{equation*}
A funcional associada a $L$ \'{e} definida por
\begin{equation}
\label{eq:CVESC1}
F_I:
\begin{cases}
\Xi\rightarrow  \mathbb{C}\\
\displaystyle X\mapsto E \left[\int_I L(X(t),{\cal D}_\mu X(t))dt\right].
\end{cases}
\end{equation}
\end{definition}

\begin{definition}
\label{FUNCE1}
Seja  $L$ um  Lagrangiano.
Um processo $X \in {\cal C}^1(I)$ \'{e} chamado de $L$-adaptado se:
\begin{enumerate}
\item para todo o $t \in I$, $\partial_x L(X(t),{\cal D}_\mu X(t))$ \'{e} ${\cal P}_t$ e ${\cal F}_t$
mensur\'{a}vel e  $\partial_x L(X(t),{\cal D}_\mu X(t)) \in L^2(\Omega)$;
\item $\partial_v L(X(t),{\cal D}_\mu X(t)) \in {\cal C}^1(I)$.
\end{enumerate}
\end{definition}


\subsection{Espa\c{c}o Variacional}

O C\'{a}lculo das Varia\c{c}\~{o}es considera o comportamento das varia\c{c}\~{o}es
no \^{a}mbito do espa\c{c}o funcional subjacente. Um cuidado especial deve ser tomado no caso
estoc\'{a}stico, para definir qual \'{e} a classe de varia\c{c}\~{o}es que estamos a considerar.
Usamos a seguinte defini\c{c}\~{a}o:

\begin{definition}
\label{FUNCE2}
Seja $\Gamma$ um subespa\c{c}o de ${\cal C}^1(I)$.
A $\Gamma$-varia\c{c}\~{a}o de $X$ \'{e} um processo estoc\'{a}stico
da forma $X+Z$, onde $Z \in \Gamma$. Por outro lado,
\begin{equation*}
\Gamma_\Xi=\{Z \in \Gamma, \forall X \in \Xi, Z+X \in \Xi\},
\end{equation*}
onde $\Gamma_\Xi$ denota um subespa\c{c}o de $\Gamma$.
\end{definition}

Vamos considerar dois subespa\c{c}os variacionais: ${\cal N}^1(I)$ e  ${\cal C}^1(I)$.


\subsection{\Index{Funcional $\Gamma$-Diferenci\'{a}vel e Processo $\Gamma$-Cr\'{\i}tico}}

Vamos definir fun\c{c}\~{a}o diferenci\'{a}vel.
No que se segue $\Gamma$ \'{e} um subespa\c{c}o de ${\cal C}^1(I)$.

\begin{definition}
\label{FUNCE3}
Seja $L$ um Lagrangiano admiss\'{\i}vel. A funcional $F_I$ diz-se
$\Gamma$-diferenci\'{a}vel em $X \in \Xi\cap L$ se
\begin{equation*}
F_I(X+Z)-F_I(X)=dF_I(X,Z)+R(X,Z)
\end{equation*}
para todo o $Z \in \Gamma_\Xi$, onde $dF_I(X,Z)$ \'{e} uma fun\c{c}\~{a}o linear
e $R(X,Z)=o(\parallel Z \parallel)$.
\end{definition}

A nossa pr\'{o}xima defini\c{c}\~{a}o \'{e} a de processo cr\'{\i}tico estoc\'{a}stico.

\begin{definition}
\label{FUNCE4}
Um processo $\Gamma$-cr\'{\i}tico para a funcional $F_I$
\'{e} um processo estoc\'{a}stico $X \in \Xi\cap L$ tal que
$dF_I(X,Z)=0$ para todos os $Z \in \Gamma_\Xi$ com
$Z(a)=Z(b)=0$.
\end{definition}

O Lema~\ref{lem:33} considera o caso em que $\Gamma={\cal C}^1(I)$
enquanto o Lema~\ref{lem:33} considera $\Gamma={\cal N}^1(I)$.

\begin{lemma}[Cap.~7 de \cite{CD:CressonDarses}]
\label{lem:33}
Seja $L$ um Lagrangiano admiss\'{\i}vel com todas as segundas derivadas limitadas.
A funcional $F_I$ definida em \eqref{eq:CVESC1} \'{e} ${\cal C}^1(I)$-diferenci\'{a}vel
para todo o $X \in \Xi \cap L$ e ${\cal C}^1(I)_\Xi={\cal C}^1(I)$. Para todos os
$Z \in {\cal C}^1(I)$, o diferencial de $F_I$ \'{e} dado por
\begin{multline*}
dF_I(X,Z)=E\left[\int_a^b\left[\frac{\partial L}{\partial x}(X(u),{\cal D}_\mu X(u))
-{\cal D}_{-\mu}\left(\frac{\partial L}{\partial x}(X(u),{\cal D}_\mu
X(u))\right)\right]Z(u)du \right]\\
+g(Z,\partial_v L)(b)-g(Z,\partial_v L)(a),
\end{multline*}
onde $g(Z,\partial_v L)(s)=E\left[Z(u)\partial_vL(X(u),{\cal D}_\mu X(u))\right]$.
\end{lemma}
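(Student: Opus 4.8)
The plan is to reproduce, in the stochastic setting, the classical computation of the first variation: expand $F_I(X+Z)-F_I(X)$ by Taylor, isolate the term linear in $Z$, and then turn the term carrying ${\cal D}_\mu Z$ into one carrying $Z$ by a stochastic ``integration by parts''. Fix $X\in\Xi$ that is $L$-adapted, so that $P:=\partial_v L(X(\cdot),{\cal D}_\mu X(\cdot))\in{\cal C}^1(I)$, and take an arbitrary $Z\in{\cal C}^1(I)$. Using linearity of ${\cal D}_\mu$, i.e.\ ${\cal D}_\mu(X+Z)={\cal D}_\mu X+{\cal D}_\mu Z$, and a second-order Taylor expansion of $L$ about $(X(u),{\cal D}_\mu X(u))$ (legitimate since $L$ is holomorphic in $v$ with bounded second derivatives), the integrand of $F_I(X+Z)-F_I(X)$ becomes $\partial_x L\cdot Z+\partial_v L\cdot{\cal D}_\mu Z$ plus a remainder of modulus $\le\tfrac{M}{2}\bigl(|Z(u)|+|{\cal D}_\mu Z(u)|\bigr)^2$, where $M$ bounds the second derivatives of $L$. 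This identifies the candidate linear map
\[
dF_I(X,Z)=E\left[\int_a^b\bigl(\partial_x L(X(u),{\cal D}_\mu X(u))\,Z(u)+\partial_v L(X(u),{\cal D}_\mu X(u))\,{\cal D}_\mu Z(u)\bigr)du\right],
\]
and, by Cauchy--Schwarz together with $\|{\cal D}_\mu Z(u)\|_{L^2}\le\|DZ(u)\|_{L^2}+\|D_*Z(u)\|_{L^2}$, the remainder obeys $|R(X,Z)|\le\tfrac{M}{2}(b-a)\|Z\|^2=o(\|Z\|)$; hence ${\cal C}^1(I)$-differentiability will follow once the formula below is established.

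For the integration by parts, write ${\cal D}_\mu=\frac{D+D_*}{2}+i\mu\frac{D-D_*}{2}$ and ${\cal D}_{-\mu}=\frac{D+D_*}{2}-i\mu\frac{D-D_*}{2}$ (Definition~\ref{Def3}). Then
\[
{\cal D}_\mu Z\cdot P+Z\cdot{\cal D}_{-\mu}P=\tfrac{D+D_*}{2}Z\cdot P+Z\cdot\tfrac{D+D_*}{2}P+i\mu\left(\tfrac{D-D_*}{2}Z\cdot P-Z\cdot\tfrac{D-D_*}{2}P\right).
\]
Taking expectations, the first two terms sum to $\tfrac{d}{du}E[Z(u)P(u)]$ — this is the symmetrized Nelson product rule, obtained by averaging Theorem~\ref{TEORDP} applied to $(Z,P)$ with the same identity applied to $(P,Z)$, and is precisely the first identity of Lemma~\ref{lema2} — while the parenthesis has zero expectation by the second identity of Lemma~\ref{lema2}. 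So $E[{\cal D}_\mu Z\cdot P+Z\cdot{\cal D}_{-\mu}P]=\tfrac{d}{du}E[ZP]$. Integrating on $[a,b]$, using the fundamental theorem of calculus (valid because $u\mapsto E[Z(u)P(u)]$ is $C^1$, its derivative continuous by the product rule and the $L^2$-continuity of $Z,DZ,D_*Z,P,DP,D_*P$ built into ${\cal C}^1(I)$) and Fubini to swap $E$ with $\int du$, yields
\[
E\left[\int_a^b\partial_v L\cdot{\cal D}_\mu Z\,du\right]=E[Z(b)P(b)]-E[Z(a)P(a)]-E\left[\int_a^b{\cal D}_{-\mu}(\partial_v L)\cdot Z\,du\right].
\]
Inserting this into $dF_I(X,Z)$ and recalling $g(Z,\partial_v L)(s)=E[Z(s)\partial_v L(X(s),{\cal D}_\mu X(s))]$ gives exactly the asserted formula, with the stochastic Euler--Lagrange operator $\partial_x L-{\cal D}_{-\mu}(\partial_v L)$ under the integral and the boundary terms $g(Z,\partial_v L)(b)-g(Z,\partial_v L)(a)$.

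It remains to verify ${\cal C}^1(I)_\Xi={\cal C}^1(I)$, that is, $X+Z\in\Xi$ for every $X\in\Xi$ and $Z\in{\cal C}^1(I)$. Bounded second derivatives give at most quadratic growth $|L(x,v)|\le C(1+|x|^2+|v|^2)$, whence
\[
E\left[\int_I|L(X+Z,{\cal D}_\mu(X+Z))|\,dt\right]\le C\int_I\bigl(1+E[|X+Z|^2]+E[|{\cal D}_\mu(X+Z)|^2]\bigr)dt<\infty,
\]
the finiteness coming from $|I|<\infty$ and the uniform boundedness on $I$ of the $L^2$-norms of $X+Z$ and its Nelson derivatives, which is built into the ${\cal C}^1(I)$-norm. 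I expect the main obstacle to be the integration-by-parts identity: one must justify rigorously the interchange of $E$, $\int du$ and $\tfrac{d}{du}$, and — the subtle point — notice that the ``adjointness'' ${\cal D}_\mu\leftrightarrow{\cal D}_{-\mu}$ under the bilinear pairing $(X,Y)\mapsto E[XY]$ follows from Lemma~\ref{lema2} (equivalently from the symmetry of Theorem~\ref{TEORDP}) and does \emph{not} require $Z$ to be Nelson differentiable, unlike the ${\cal N}^1(I)$ case where one would instead use Corollary~\ref{CORL3}. The remainder estimate and the growth bound are routine given the hypotheses on $L$.
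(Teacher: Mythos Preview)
The paper does not give its own proof of this lemma; it is merely quoted from Chapter~7 of Cresson--Darses. Your argument is exactly the one used there: Taylor-expand the integrand using boundedness of the second derivatives to control the remainder, identify the linear part, and then transfer ${\cal D}_\mu$ from $Z$ onto $\partial_v L$ via the adjointness ${\cal D}_\mu\leftrightarrow{\cal D}_{-\mu}$ under $E[\,\cdot\,]$, which you correctly derive from Lemma~\ref{lema2} by splitting into real and imaginary parts. The only small point to tidy up is that $P=\partial_vL(X,{\cal D}_\mu X)$ is in general complex-valued, so Lemma~\ref{lema2} (stated for ${\cal C}^1(I)$) should strictly be applied componentwise to $\mathrm{Re}\,P$ and $\mathrm{Im}\,P$, or extended to ${\cal C}^1_{\mathbb{C}}(I)$ by bilinearity; this is routine and does not affect the argument.
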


\begin{lemma}[Cap.~7 de \cite{CD:CressonDarses}]
\label{lem:34}
Seja $L$ um Lagrangiano admiss\'{\i}vel com todas as segundas derivadas limitadas.
A funcional $F_I$ definida em \eqref{eq:CVESC1} \'{e} ${\cal N}^1(I)$-diferenci\'{a}vel
para todo o $X \in \Xi \cap {\cal L}$ e ${\cal N}^1(I)_\Xi={\cal N}^1(I)$.
Para todos os $Z \in {\cal N}^1(I)$, o diferencial \'{e} dado por
\begin{multline*}
dF_I(X,Z)=E\left[\int_a^b\left[\frac{\partial L}{\partial x}(X(u),{\cal D}_\mu X(u))
-{\cal D}_{\mu}\left(\frac{\partial L}{\partial x}(X(u),{\cal D}_\mu X(u))\right)\right]Z(u)du \right]\\
+g(Z,\partial_v L)(b)-g(Z,\partial_v L)(a),
\end{multline*}
onde $g(Z,\partial_v L)(s)=E[Z(u)\partial_vL(X(u),{\cal D}_\mu X(u))]$.
\end{lemma}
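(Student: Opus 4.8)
The plan is to follow the proof of Lemma~\ref{lem:33} (the $\Gamma={\cal C}^1(I)$ case), in the spirit of \cite{CD:CressonDarses}; the only structural change is that here the admissible variations $Z$ lie in ${\cal N}^1(I)$ and are therefore Nelson differentiable, so the integration-by-parts step will rest on Corollary~\ref{CORL3} rather than on the stochastic Leibniz rule of Theorem~\ref{TEORDP}/Lemma~\ref{lema2}, and it is precisely this that replaces the operator ${\cal D}_{-\mu}$ appearing in Lemma~\ref{lem:33} by ${\cal D}_{\mu}$.

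First I would fix $X\in\Xi\cap{\cal L}$ (so $X$ is $L$-adapted in the sense of Definition~\ref{FUNCE1}) and an arbitrary $Z\in{\cal N}^1(I)$. By linearity of ${\cal D}_\mu$ one has ${\cal D}_\mu(X+Z)={\cal D}_\mu X+{\cal D}_\mu Z$, and since $Z$ is Nelson differentiable ${\cal D}_\mu Z=DZ=D_*Z$ is $\mathbb{R}^d$-valued. A first-order Taylor expansion of the admissible Lagrangian $L$ about the point $\left(X(u),{\cal D}_\mu X(u)\right)$ --- legitimate since $L$ is holomorphic in its second argument and smooth in its first --- gives, pointwise in $u\in I$,
\[
L\!\left(X+Z,\,{\cal D}_\mu X+{\cal D}_\mu Z\right)
=L\!\left(X,{\cal D}_\mu X\right)
+\partial_x L\!\left(X,{\cal D}_\mu X\right)\cdot Z
+\partial_v L\!\left(X,{\cal D}_\mu X\right)\cdot{\cal D}_\mu Z
+r(X,Z,u),
\]
where, since all second derivatives of $L$ are bounded, $|r(X,Z,u)|\le C\left(|Z(u)|^2+|{\cal D}_\mu Z(u)|^2\right)$ with $C$ independent of $u$. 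Applying $E\!\left[\int_I(\cdot)\,du\right]$, the zeroth-order term reproduces $F_I(X)$, the two first-order terms are linear in $Z$, and the remainder $R(X,Z):=E\!\left[\int_I r(X,Z,u)\,du\right]$ is $O\left(\|Z\|^2\right)$ for the norm of Definition~\ref{C1}, hence $o(\|Z\|)$. The same bound, together with the at most linear growth of the first derivatives of $L$ and the fact that $X,Z\in{\cal C}^1(I)$ with $I$ bounded, settles the routine identity ${\cal N}^1(I)_\Xi={\cal N}^1(I)$, since then $\left|L\!\left(X+Z,{\cal D}_\mu(X+Z)\right)\right|$ is dominated by an integrable function and so $X+Z\in\Xi$.

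Next I would integrate by parts the term $E\!\left[\int_I\partial_v L\!\left(X,{\cal D}_\mu X\right)\cdot{\cal D}_\mu Z\,du\right]$. Because $X$ is $L$-adapted, $u\mapsto\partial_v L\!\left(X(u),{\cal D}_\mu X(u)\right)$ belongs to ${\cal C}^1(I)$; and because $Z\in{\cal N}^1(I)$ it is Nelson differentiable, so Corollary~\ref{CORL3}, applied with $Z$ as the Nelson-differentiable factor and $\partial_v L\!\left(X,{\cal D}_\mu X\right)$ as the other, gives pointwise in $u$
\[
E\!\left[{\cal D}_\mu Z\cdot\partial_v L\!\left(X,{\cal D}_\mu X\right)\right]
=\frac{d}{du}E\!\left[Z\,\partial_v L\!\left(X,{\cal D}_\mu X\right)\right]
-E\!\left[Z\cdot{\cal D}_\mu\!\left(\partial_v L\!\left(X,{\cal D}_\mu X\right)\right)\right].
\]
Integrating over $I=(a,b)$, the total-derivative term contributes exactly $g(Z,\partial_v L)(b)-g(Z,\partial_v L)(a)$ with $g(Z,\partial_v L)(s)=E\!\left[Z(s)\,\partial_v L\!\left(X(s),{\cal D}_\mu X(s)\right)\right]$, and substituting back into the expansion isolates the linear part
\[
dF_I(X,Z)=E\!\left[\int_a^b\!\left[\partial_x L\!\left(X,{\cal D}_\mu X\right)
-{\cal D}_\mu\!\left(\partial_v L\!\left(X,{\cal D}_\mu X\right)\right)\right]Z\,du\right]
+g(Z,\partial_v L)(b)-g(Z,\partial_v L)(a),
\]
which is linear in $Z$; together with the remainder estimate this shows that $F_I$ is ${\cal N}^1(I)$-differentiable at every such $X$ and yields the announced formula.

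The step I expect to be the main obstacle is making the remainder estimate $R(X,Z)=o(\|Z\|)$ genuinely rigorous in the stochastic $L^2$ setting: one has to control $E\!\left[\int_I|{\cal D}_\mu Z(u)|^2\,du\right]$ by a multiple of $\|Z\|^2$ --- which is exactly where Lemma~\ref{lema1}, expressing ${\cal D}_\mu Z$ through $DZ$, $D_*Z$ and second-order spatial derivatives, comes in --- and to justify differentiating under the expectation and exchanging $E$, the time integral and the limits defining $DZ$ and $D_*Z$, all under the integrability built into $\Xi$ and ${\cal C}^1(I)$. Once those analytic points are dispatched, the remainder of the argument is a direct transcription of the classical integration-by-parts proof, with Corollary~\ref{CORL3} playing the role of the Leibniz rule.
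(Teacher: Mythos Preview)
The paper itself does not prove Lemma~\ref{lem:34}; it only records the statement and refers to Chapter~7 of \cite{CD:CressonDarses}. There is therefore no in-paper argument to compare your proposal against. That said, your approach --- first-order Taylor expansion of $L$ to isolate the linear part, a quadratic remainder bound coming from the boundedness of the second derivatives, and a stochastic integration by parts on the term $E\bigl[\int_I\partial_v L\cdot{\cal D}_\mu Z\,du\bigr]$ --- is the standard one and is sound. You have also correctly isolated the single structural difference from Lemma~\ref{lem:33}: because $Z\in{\cal N}^1(I)$ is Nelson differentiable, Corollary~\ref{CORL3} applies with ${\cal D}_\mu$ on \emph{both} factors, so the integration by parts produces ${\cal D}_\mu(\partial_v L)$ instead of ${\cal D}_{-\mu}(\partial_v L)$. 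This is precisely the distinction between the two lemmas, and it is presumably also how the proof runs in \cite{CD:CressonDarses}.

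One small remark on your discussion of the remainder: the control of $E\!\left[\int_I|{\cal D}_\mu Z(u)|^2\,du\right]$ by a multiple of $\|Z\|^2$ does not actually require Lemma~\ref{lema1}. Since $Z$ is Nelson differentiable one has ${\cal D}_\mu Z=DZ=D_*Z$, and Definition~\ref{C1} already gives $\|DZ(u)\|_{L^2(\Omega)}\le\|Z\|$ for every $u\in I$, so $E\!\left[\int_I|{\cal D}_\mu Z|^2\,du\right]\le|I|\,\|Z\|^2$ immediately. Lemma~\ref{lema1} concerns $Df(t,X(t))$ for a function $f$ of a diffusion $X\in\Lambda^1$ and is not the relevant tool here.
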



\subsection{Equa\c{c}\~{a}o de Euler--Lagrange Estoc\'{a}stica}

Nesta sec\c{c}\~{a}o vamos determinar a equa\c{c}\~{a}o de Euler--Lagrange estoc\'{a}stica.
Tamb\'{e}m aqui, consideramos dois casos para o subespa\c{c}o variacional:
${\cal N}^1(I)$ e ${\cal C}^1(I)$.


\subsubsection{Caso ${\cal C}^1(I)$}

\begin{theorem}
Seja $L$ um Lagrangiano admiss\'{\i}vel com todas as segundas derivadas limitadas.
Uma condi\c{c}\~{a}o necess\'{a}ria e suficiente para que o processo
$\Xi \in {\cal L}\cap {\cal C}^3(I)$ seja um processo
${\cal C}^1(I)$-cr\'{\i}tico associado \`{a} funcional $F_I$ \'{e} dada por
\begin{equation}
\label{eq:EELEST}
\frac{\partial L}{\partial x}(X(t),{\cal D}_\mu X(t))
-{\cal D}_{-\mu}\left[\frac{\partial L}{\partial v}(X(t),{\cal D}_\mu X(t))\right]=0.
\end{equation}
\end{theorem}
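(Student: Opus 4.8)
The plan is to read off \eqref{eq:EELEST} from the explicit formula for the first variation $dF_I(X,Z)$ that was already established in Lemma~\ref{lem:33}, combining it with the definition of a ${\cal C}^1(I)$-critical process (Definition~\ref{FUNCE4}) and a stochastic analogue of the fundamental lemma of the calculus of variations (Lemma~\ref{LemaF}).

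\textbf{Step 1 (use Lemma~\ref{lem:33}).} Since $L$ is admissible with bounded second derivatives and $X$ is $L$-adapted, Lemma~\ref{lem:33} applies: $F_I$ is ${\cal C}^1(I)$-differentiable at $X$, one has ${\cal C}^1(I)_\Xi = {\cal C}^1(I)$, and, writing
\begin{equation*}
h(u) := \frac{\partial L}{\partial x}(X(u),{\cal D}_\mu X(u))
- {\cal D}_{-\mu}\Bigl(\frac{\partial L}{\partial v}(X(u),{\cal D}_\mu X(u))\Bigr),
\end{equation*}
the differential is $dF_I(X,Z) = E\bigl[\int_a^b h(u) Z(u)\,du\bigr] + g(Z,\partial_v L)(b) - g(Z,\partial_v L)(a)$. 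The hypothesis $X \in {\cal C}^3(I)$ is what guarantees that ${\cal D}_{-\mu}(\partial_v L)$ is again a well-behaved $S1$-process, so that $h \in {\cal C}^1(I)$ and the bilinear pairing above makes sense.

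\textbf{Step 2 (apply the definition of critical process).} By Definition~\ref{FUNCE4}, $X$ is ${\cal C}^1(I)$-critical if and only if $dF_I(X,Z) = 0$ for every $Z \in {\cal C}^1(I)$ with $Z(a) = Z(b) = 0$. For such $Z$ the boundary term $g(Z,\partial_v L)(b) - g(Z,\partial_v L)(a)$ vanishes, so criticality is equivalent to
\begin{equation*}
E\left[\int_a^b h(u)\, Z(u)\, du\right] = 0
\end{equation*}
for all admissible variations $Z$ vanishing at the endpoints. The sufficiency direction is then immediate: if \eqref{eq:EELEST} holds, $h \equiv 0$ and $dF_I(X,Z) = 0$ for every admissible null-boundary $Z$.

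\textbf{Step 3 (stochastic fundamental lemma) and main obstacle.} The delicate part is the converse: passing from the vanishing of $E[\int_a^b h(u)Z(u)\,du]$, for all such $Z$, to the pointwise identity $h \equiv 0$. One cannot simply take $Z = h$ since $h$ need not vanish at $a,b$; instead I would test against $Z(u) = \varphi(u)\,h(u)$ for an arbitrary smooth deterministic bump $\varphi \geq 0$ compactly supported in $(a,b)$, which is a legitimate element of ${\cal C}^1(I)$ precisely because $h \in {\cal C}^1(I)$, obtaining $E\bigl[\int_a^b \varphi(u)\,|h(u)|^2\,du\bigr] = 0$. Since $\varphi$ is arbitrary and $u \mapsto h(u)$ is continuous from $I$ into $L^2(\Omega)$, this forces $h(u) = 0$ in $L^2(\Omega)$, hence almost surely, for every $u \in I$, which is exactly \eqref{eq:EELEST}. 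The points requiring care are that $\varphi h$ indeed remains an admissible ($\Xi$-preserving) variation (adaptedness and $L^2$-integrability are inherited from $h \in {\cal C}^1(I)$), and that the complex-valued structure is handled by taking real parts as in Lemma~\ref{lema2} and Corollary~\ref{CORL3}, so that testing against real directions suffices to annihilate $h$; this is where the bulk of the technical verification lies.
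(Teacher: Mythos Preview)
Your proposal is correct and follows essentially the same route as the paper: the paper also derives sufficiency immediately from Lemma~\ref{lem:33} and, for necessity, tests against variations of the form $Z_n(u)=\phi_n(u)\,\mathrm{Re}(h(u))$ and $Z_n(u)=\phi_n(u)\,\mathrm{Im}(h(u))$ with deterministic bump functions $\phi_n$ exhausting $I$, obtaining $E\bigl[\int \mathrm{Re}(h)^2\bigr]=E\bigl[\int \mathrm{Im}(h)^2\bigr]=0$ exactly as you outline in Step~3. The only cosmetic difference is that the paper makes the real/imaginary splitting explicit from the outset rather than deferring it, and uses an exhausting sequence of bumps in place of your arbitrary $\varphi$ plus $L^2$-continuity.
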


\begin{proof}
Sem perda de generalidade, consideramos $I=(0,1)$.
Seja $X \in {\cal C}^3(I)$ uma solu\c{c}\~{a}o de
\begin{equation*}
\frac{\partial L}{\partial x}(X(t),{\cal D}_\mu X(t))
-{\cal D}_{-\mu}\left[\frac{\partial L}{\partial v}(X(t),{\cal D}_\mu X(t))\right]=0.
\end{equation*}
Ent\~{a}o, $X$ \'{e} ${\cal C}^1(I)$-cr\'{\i}tico associado \`{a} funcional $F_I$.
Se $X$ \'{e} ${\cal C}^1(I)$-cr\'{\i}tico associado \`{a} funcional $F_I$, isto \'{e}, $dF_I(X,Z)=0$, temos
\begin{equation*}
Re(dF_I(X,Z))=Im(dF_I(X,Z))=0.
\end{equation*}
Vamos definir
\begin{equation*}
Z_n^{(1)}(u)=\phi_n^{(1)}(u).Re\left(\frac{\partial L}{\partial x}(X(t),{\cal D}_\mu X(t))
-{\cal D}_{-\mu}\left[\frac{\partial L}{\partial v}(X(t),{\cal D}_\mu X(t))\right]\right)
\end{equation*}
e
\begin{equation*}
Z_n^{(2)}(u)=\phi_n^{(2)}(u).Im\left(\frac{\partial L}{\partial x}(X(t),{\cal D}_\mu X(t))
-{\cal D}_{-\mu}\left[\frac{\partial L}{\partial v}(X(t),{\cal D}_\mu X(t))\right]\right),
\end{equation*}
onde $\left(\phi_n^{(i)}\right)_{n \in N}$ \'{e} uma sequ\^{e}ncia de $C^\infty([0,1]\rightarrow \mathbb{R}^+)$
determin\'{\i}stica em $[0,1]$, isto \'{e}, para todo o $n \in N$, $\phi_n(0)=\phi_n(1)=0$ e $\phi_n=1$
em $[\alpha_n,\beta_n]$ com $0<\alpha_n$, $\beta_n<1$, $\lim_{n\rightarrow \infty}\alpha_n=0$
e $\lim_{n\rightarrow \infty}\beta_n=1$. Assim, para todo o $n \in N$,
\begin{equation*}
Re(dF_I(X,Z_n^{(1)}))=E\left[\int_0^1
\phi_n(u)Re^2\left(\frac{\partial L}{\partial x}(X(t),{\cal D}_\mu X(t))
-{\cal D}_{-\mu}\left[\frac{\partial L}{\partial v}(X(t),{\cal D}_\mu X(t))\right]\right)du\right]=0,
\end{equation*}
\begin{equation*}
E\left[\int_0^1 Re^2\left(\frac{\partial L}{\partial x}(X(t),{\cal D}_\mu X(t))
-{\cal D}_{-\mu}\left[\frac{\partial L}{\partial v}(X(t),{\cal D}_\mu X(t))\right]\right)du\right]=0.
\end{equation*}
Usando o mesmo argumento,
\begin{equation*}
E\left[\int_0^1 Im^2\left(\frac{\partial L}{\partial x}(X(t),{\cal D}_\mu X(t))
-{\cal D}_{-\mu}\left[\frac{\partial L}{\partial v}(X(t),{\cal D}_\mu X(t))\right]\right)du\right]=0.
\end{equation*}
Portanto, para quase todos os $t \in [0,1]$ e quase todos $w \in \Omega$,
\begin{equation*}
\frac{\partial L}{\partial x}(X(t),{\cal D}_\mu X(t))
-{\cal D}_{-\mu}\left[\frac{\partial L}{\partial v}(X(t),{\cal D}_\mu X(t))\right]=0.
\end{equation*}
\end{proof}

A equa\c{c}\~{a}o \eqref{eq:EELEST} \'{e} chamada
de equa\c{c}\~{a}o de Euler--Lagrange estoc\'{a}stica.


\subsubsection{Caso ${\cal N}^1(I)$}

A demonstra\c{c}\~{a}o do pr\'{o}xima lema pode ser encontrada
no Cap.~7 de \cite{CD:CressonDarses}.

\begin{lemma}
Seja $L$ um  Langrangiano admiss\'{\i}vel com as segundas derivadas limitadas.
Se $X$ \'{e} solu\c{c}\~{a}o da equa\c{c}\~{a}o de Euler--Lagrange estoc\'{a}stica
\begin{equation*}
\frac{\partial L}{\partial x}(X(t),{\cal D}_\mu X(t))
-{\cal D}_{\mu}\left[\frac{\partial L}{\partial v}(X(t),{\cal D}_\mu X(t))\right]=0,
\end{equation*}
ent\~{a}o $X$ \'{e} um processo ${\cal N}^1(I)$-cr\'{\i}tico para a funcional $F_I$ associada a $L$.
\end{lemma}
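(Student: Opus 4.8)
O plano \'{e} reduzir tudo \`{a} f\'{o}rmula para o diferencial $dF_I$ obtida no Lema~\ref{lem:34}, uma vez que esse lema j\'{a} cont\'{e}m todo o trabalho anal\'{\i}tico de fundo: a diferenciabilidade da funcional, a integra\c{c}\~{a}o por partes estoc\'{a}stica (via Corol\'{a}rio~\ref{CORL3}) e a identifica\c{c}\~{a}o dos termos de fronteira. Primeiro, observo que, pela hip\'{o}tese de $L$ ser um Lagrangiano admiss\'{\i}vel com as segundas derivadas limitadas e de $X$ ter a regularidade necess\'{a}ria, vale $X \in \Xi \cap {\cal L}$ e, pelo Lema~\ref{lem:34}, tem-se ${\cal N}^1(I)_\Xi={\cal N}^1(I)$; logo, pela Defini\c{c}\~{a}o~\ref{FUNCE4}, basta mostrar que $dF_I(X,Z)=0$ para todo o $Z \in {\cal N}^1(I)$ com $Z(a)=Z(b)=0$.

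Em seguida, escrevo $dF_I(X,Z)$ na forma fornecida pelo Lema~\ref{lem:34}: a soma de um termo integral, cujo n\'{u}cleo \'{e}
\[
\frac{\partial L}{\partial x}(X(u),{\cal D}_\mu X(u))
-{\cal D}_{\mu}\left(\frac{\partial L}{\partial v}(X(u),{\cal D}_\mu X(u))\right),
\]
multiplicado por $Z(u)$, mais os termos de fronteira $g(Z,\partial_v L)(b)-g(Z,\partial_v L)(a)$. Como, por hip\'{o}tese, $X$ \'{e} solu\c{c}\~{a}o da equa\c{c}\~{a}o de Euler--Lagrange estoc\'{a}stica na vers\~{a}o com ${\cal D}_\mu$, esse n\'{u}cleo \'{e} identicamente nulo (para quase todo $t \in I$ e quase todo $\omega \in \Omega$), pelo que o termo integral se anula. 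Por outro lado, sendo $g(Z,\partial_v L)(s)=E[Z(s)\,\partial_v L(X(s),{\cal D}_\mu X(s))]$ e $Z(a)=Z(b)=0$, os termos de fronteira tamb\'{e}m se anulam. Conclui-se que $dF_I(X,Z)=0$ para todo o $Z \in {\cal N}^1(I)$ com $Z(a)=Z(b)=0$, isto \'{e}, $X$ \'{e} um processo ${\cal N}^1(I)$-cr\'{\i}tico para $F_I$, como se pretendia.

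N\~{a}o antecipo nenhum obst\'{a}culo s\'{e}rio nesta parte: o argumento \'{e} essencialmente uma leitura direta do Lema~\ref{lem:34}. O \'{u}nico ponto que exige aten\c{c}\~{a}o \'{e} o facto de o subespa\c{c}o variacional ser ${\cal N}^1(I)$, o espa\c{c}o dos processos Nelson diferenci\'{a}veis: \'{e} precisamente essa diferenciabilidade de Nelson que permite aplicar o Corol\'{a}rio~\ref{CORL3} na integra\c{c}\~{a}o por partes e que faz aparecer ${\cal D}_\mu$ (e n\~{a}o ${\cal D}_{-\mu}$) na equa\c{c}\~{a}o de Euler--Lagrange; por essa raz\~{a}o, a equa\c{c}\~{a}o relevante neste caso difere da equa\c{c}\~{a}o \eqref{eq:EELEST} obtida no caso ${\cal C}^1(I)$.
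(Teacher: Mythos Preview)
Your argument is correct: once Lema~\ref{lem:34} is granted, the proof is exactly the substitution you describe --- the Euler--Lagrange kernel vanishes by hypothesis and the boundary terms vanish because $Z(a)=Z(b)=0$, so $dF_I(X,Z)=0$ for every admissible $Z\in\mathcal{N}^1(I)$. Note, however, that the paper itself does not supply a proof of this lemma: it simply refers the reader to Cap.~7 of \cite{CD:CressonDarses}. Your write-up is therefore not competing with an in-paper argument but rather filling in what the paper leaves to the reference, and it does so along the expected lines (Lema~\ref{lem:34} already encapsulates the integration by parts via Corol\'{a}rio~\ref{CORL3}, which is precisely the mechanism used in \cite{CD:CressonDarses}). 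One small remark: in the statement of Lema~\ref{lem:34} as printed in the paper, the second occurrence of $\partial L/\partial x$ inside $\mathcal{D}_\mu(\cdot)$ is a typographical slip for $\partial L/\partial v$; you have silently corrected this, which is the right thing to do.
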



\section{Teorema de Noether Estoc\'{a}stico}
\label{sec:TN:Est}

Nesta sec\c{c}\~{a}o vamos demonstrar um Teorema de Noether
no contexto estoc\'{a}stico para problemas aut\'{o}nomos.
Para isso vamos definir, seguindo Cresson \cite{CD:J.Cresson},
vetor tangente a um processo estoc\'{a}stico, suspens\~{a}o estoc\'{a}stica de uma fam\'{\i}lia param\'{e}trica
de difeomorfismos, invari\^{a}ncia e primeiro integral estoc\'{a}stico.

Seja $X \in {\cal C}^1(I)$ um processo estoc\'{a}stico. Definimos \emph{vetor tangente}
a um processo estoc\'{a}stico de modo an\'{a}logo a vetor tangente de $X$ no ponto $t$.

\begin{definition}
\label{DefNE1}
Seja $X \in {\cal C}^1(I)$, $I\subset \mathbb{R}$.
O vetor tangente a $X$ no ponto $t$ \'{e} dado por ${\cal D}X(t)$.
\end{definition}

Vamos agora introduzir a no\c{c}\~{a}o de suspens\~{a}o estoc\'{a}stica
de uma fam\'{\i}lia param\'{e}trica de difeomorfismos.

\begin{definition}
\label{DefNE2}
Seja $\phi:\mathbb{R}^d\rightarrow \mathbb{R}^d$ um difeomorfismo.
A suspens\~{a}o estoc\'{a}stica de $\phi$ \'{e} a aplica\c{c}\~{a}o $\Phi:P\rightarrow P$
definida por
\begin{equation*}
\forall X \in P, \, \Phi(X)_t(w)=\phi(X_t(w)).
\end{equation*}
\end{definition}

\begin{definition}
\label{DefNE3}
Um grupo uni-param\'{e}trico de transforma\c{c}\~{o}es
$\Phi_s:\Upsilon\rightarrow \Upsilon,\ s \in \mathbb{R}$,
onde $\Upsilon \subset P$, \'{e} chamado de $\phi$-grupo de suspens\~{a}o
agindo em $\Upsilon$ se existir um grupo uni-param\'{e}trico de difeomorfismos
$\phi_s:\mathbb{R}^d\rightarrow \mathbb{R}^d$, $s \in \mathbb{R}$,
de tal forma que para todo o $s \in \mathbb{R}$ temos:
\begin{enumerate}
\item $\Phi_s$ \'{e} uma suspens\~{a}o estoc\'{a}stica de $\phi_s$;
\item  $X \in \Upsilon, \ \phi_s(X) \in \Upsilon$.
\end{enumerate}
\end{definition}

\begin{lemma}[\cite{CD:J.Cresson}]
\label{lemaEN1}
Seja $\Phi=(\phi_s)_{s \in \mathbb{R}}$ uma suspens\~{a}o estoc\'{a}stica
de um grupo uni-param\'{e}trico de difeomorfismos. Ent\~{a}o, para todo o $X \in \Lambda$,
temos para todo o $t \in I$ e todo o $s \in R$ que
\begin{enumerate}
\item a aplica\c{c}\~{a}o $s\mapsto {\cal D}_\mu\Phi_sX(t) \in C^1(\mathbb{R})$;
\item $\frac{\partial}{\partial s}[{\cal D}_\mu(\phi_s(X))]
={\cal D}_\mu \left[\frac{\partial \phi_s(X)}{\partial s}\right]$.
\end{enumerate}
\end{lemma}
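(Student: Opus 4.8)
The plan is to reduce the statement to the stochastic chain rule for Nelson derivatives (Lemma~\ref{lema1} and Corollaries~\ref{Cor1}--\ref{Cor2}), which turns the delicate interchange of $\partial/\partial s$ with the conditional expectations defining $D$ and $D_*$ into an ordinary Clairaut-type commutation of partial derivatives in finitely many variables.

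First I would record the suspension as a functional of the diffusion: by Definition~\ref{DefNE2} one has $\Phi_s X(t) = \phi_s(X(t))$, that is $\Phi_s X(t) = f_s(t,X(t))$ with $f_s(t,x) = \phi_s(x)$, which is autonomous in $t$ and, since $(\phi_s)_{s\in\mathbb{R}}$ is a one-parameter group of diffeomorphisms generated by a smooth vector field, is as regular in $x$ as Lemma~\ref{lema1} requires. Applying Corollary~\ref{Cor2} (or Corollary~\ref{Cor1} when the diffusion coefficient is not constant) to $f_s$ yields the identity
\begin{equation}
\label{eq:susp-chain}
{\cal D}_\mu \Phi_s X(t) = \left[{\cal D}_\mu X(t)\cdot\nabla\phi_s + \frac{i\mu\sigma^2}{2}\,\Delta\phi_s\right](X(t)),
\end{equation}
where $\nabla\phi_s$ is the Jacobian of $\phi_s$ and $\Delta\phi_s$ its componentwise Laplacian. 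The structural point is that ${\cal D}_\mu X(t)$ carries no $s$-dependence: the entire $s$-dependence of the right-hand side of \eqref{eq:susp-chain} sits in the deterministic maps $x\mapsto\nabla\phi_s(x)$ and $x\mapsto\Delta\phi_s(x)$, evaluated along the fixed process $X$.

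For item~(1) I would use that $s\mapsto\phi_s$ is smooth into $C^\infty(\mathbb{R}^d,\mathbb{R}^d)$, so $s\mapsto\nabla\phi_s(X(t))$ and $s\mapsto\Delta\phi_s(X(t))$ are $C^1$ maps from $\mathbb{R}$ into $L^2(\Omega)$; multiplying the first by the $s$-independent element ${\cal D}_\mu X(t)\in L^2(\Omega)$ and adding, the right-hand side of \eqref{eq:susp-chain} is $C^1$ in $s$, hence $s\mapsto{\cal D}_\mu\Phi_s X(t)\in C^1(\mathbb{R})$. For item~(2) I would differentiate \eqref{eq:susp-chain} in $s$, push $\partial/\partial s$ through $\nabla$ and $\Delta$ (legitimate by Clairaut's theorem, since $\phi$ is jointly smooth in $(s,x)$), and set $\psi_s:=\partial\phi_s/\partial s$, obtaining
\begin{equation*}
\frac{\partial}{\partial s}\left[{\cal D}_\mu(\phi_s(X))\right]
= \left[{\cal D}_\mu X(t)\cdot\nabla\psi_s + \frac{i\mu\sigma^2}{2}\,\Delta\psi_s\right](X(t)).
\end{equation*}
But the right-hand side is exactly what Corollary~\ref{Cor2} produces for the functional $\psi_s(X(t))$, so it equals ${\cal D}_\mu(\psi_s(X)) = {\cal D}_\mu\left[\frac{\partial\phi_s(X)}{\partial s}\right]$, which is the asserted commutation.

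The main obstacle is purely technical: one must check that the chain rule of Lemma~\ref{lema1} genuinely applies to $\psi_s=\partial\phi_s/\partial s$ --- that is, that this map has the $C^{1,2}$ regularity (and the integrability along the trajectory) demanded there --- and that passing from $\omega$-a.s.\ differentiation in $s$ to differentiation in $L^2(\Omega)$ is licit; both follow from the smoothness of the suspension group $(\phi_s)_s$ together with the integrability conditions built into $\Lambda$. Once \eqref{eq:susp-chain} is in hand, no stochastic analysis remains, and the identity $\partial_s\circ{\cal D}_\mu = {\cal D}_\mu\circ\partial_s$ along suspensions is just Clairaut's theorem in disguise.
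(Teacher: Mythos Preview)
The paper does not prove this lemma: it is stated with a citation to \cite{CD:J.Cresson} and used as a black box in the proof of Theorem~\ref{thm:TN:Est}, so there is no ``paper's own proof'' to compare against.

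That said, your argument is the natural one and matches the strategy in the Cresson--Darses references. Expressing $\mathcal{D}_\mu\Phi_s X(t)$ via the stochastic chain rule (Corollary~\ref{Cor1} or~\ref{Cor2}) isolates all $s$-dependence inside the deterministic coefficients $\nabla\phi_s$ and $a^{ij}\partial_{x_ix_j}\phi_s$, after which commuting $\partial_s$ with $\mathcal{D}_\mu$ is indeed just Clairaut's theorem on the flow $(s,x)\mapsto\phi_s(x)$. One small caution: when you write that ``multiplying the first by the $s$-independent element $\mathcal{D}_\mu X(t)\in L^2(\Omega)$'' preserves $C^1$-regularity into $L^2(\Omega)$, you are implicitly using that $\nabla\phi_s(X(t))$ and $\partial_s\nabla\phi_s(X(t))$ are in $L^\infty(\Omega)$ (or at least that the relevant products land in $L^2$); this is exactly the boundedness hypothesis hiding behind $f\in C_b^{1,2}$ in Lemma~\ref{lema1} and Definition~\ref{Def4}, and it is worth making explicit rather than folding into the closing paragraph. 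With that caveat noted, the outline is sound.
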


Seja $X \in \cal{C}^1(I)$ e $\phi : \mathbb{R}^d \rightarrow \mathbb{R}^d$
um difeomorfismo. A imagem de $X$ sob a suspens\~{a}o estoc\'{a}stica de $\phi$,
denotada por $\Phi$, induz de modo natural uma aplica\c{c}\~{a}o para os vetores
tangentes, denotada por $\Phi_*$ e chamada de aplica\c{c}\~{a}o linear tangente,
definida como na geometria diferencial cl\'{a}ssica:

\begin{definition}
Seja $\Phi$ uma suspens\~{a}o estoc\'{a}stica de um difeomorfismo $\phi$
tal que a sua $k$-\'{e}sima componente $\phi^{(k)}$ pertence a $\mathcal{T}$.
A aplica\c{c}\~{a}o linear tangente associada a $\Phi$, e denotada por $\Phi_*$,
\'{e} definida para todo o $X \in \cal{C}^1(I)$ por
$\Phi_*(X) = T(\Phi(X)) = (\Phi(X),{\cal D}(\Phi(X)))$.
\end{definition}

Obtemos ent\~{a}o a seguinte no\c{c}\~{a}o de invari\^{a}ncia sob a a\c{c}\~{a}o
de um grupo uni-param\'{e}trico de difeomorfismos.

\begin{definition}
\label{DefNE4}
Seja $\Phi=\{\phi_s\}_{s \in \mathbb{R}}$ um grupo uni-param\'{e}trico
de difeomorfismos e seja $L:{\cal C}^1(I)\rightarrow {\cal C}_{\mathbb{C}}^1(I)$.
A funcional $L$ \'{e} invariante sob $\Phi$ se
$L(\phi_*X)=L(X)$ para todos o $\phi \in \Phi$.
\end{definition}

Como consequ\^{e}ncia da Defini\c{c}\~{a}o~\ref{DefNE4}, se $L$ \'{e} invariante temos
que $L\left(\phi_*X; D(\phi_*X)\right)=L(X, DX)$
para todo o $s \in \mathbb{R}$ e $X \in {\cal C}^1(I)$.
Estamos em condi\c{c}\~{o}es de enunciar e demonstrar
o teorema de Noether no contexto estoc\'{a}stico.

\begin{theorem}[\Index{Teorema de Noether Estoc\'{a}stico}]
\label{thm:TN:Est}
Seja $L$ um Lagrangiano admiss\'{\i}vel com todas as segundas derivadas limitadas
e invariante sob um grupo uni-param\'{e}trico de difeomorfismos
$\Phi=\{\phi_s\}_{s \in \mathbb{R}}$. Seja $F_I$ a funcional associada a $L$
e definida por \eqref{eq:CVESC1} em $\Xi$. Seja $X \in \Xi \cap L$
de classe ${\cal C}^1(I)$ um ponto estacion\'{a}rio de $F_I$. Ent\~{a}o,
\begin{equation*}
\frac{d}{dt}E \left[\partial_v L
\cdot \left.\frac{\partial Y}{\partial s}\right|_{s=0}\right]=0,
\end{equation*}
onde $Y_s=\Phi_s(X)$.
\end{theorem}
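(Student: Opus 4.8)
A estratégia é imitar a prova clássica do Teorema de Noether (Teorema~\ref{TEORMN}), trocando a derivada usual pela derivada estocástica de Nelson e usando a regra de Leibniz estocástica do Corolário~\ref{CORL3}. Primeiro, escrevo $Y_s = \Phi_s(X)$, onde $\Phi_s$ é a suspensão estocástica do grupo uni-paramétrico $\phi_s$, e defino o gerador infinitesimal $\left.\frac{\partial Y_s}{\partial s}\right|_{s=0}$, que denotarei por $Z$. Pelo Lema~\ref{lemaEN1}, a aplicação $s \mapsto {\cal D}_\mu \Phi_s X(t)$ é de classe $C^1$ e as derivadas em $s$ e ${\cal D}_\mu$ comutam; em particular ${\cal D}_\mu Z = \left.\frac{\partial}{\partial s}\right|_{s=0}{\cal D}_\mu Y_s$. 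Como $X \in {\cal C}^1(I)$, o processo $Z$ também pertence a ${\cal C}^1(I)$ (é variação admissível no subespaço variacional).

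**Passos principais.** Em segundo lugar, derivo a identidade de invariância infinitesimal: da Definição~\ref{DefNE4}, $L(\Phi_{s*}X) = L(X)$ para todo $s$, logo, avaliando dentro da esperança e integrando, $F_I(Y_s)$ não depende de $s$. Derivando em ordem a $s$ em $s=0$ e usando o Corolário~\ref{Cor1} para passar a derivada para dentro de $L$, obtenho
\begin{equation*}
E\left[\int_I \left( \partial_x L(X(t),{\cal D}_\mu X(t)) \cdot Z(t)
+ \partial_v L(X(t),{\cal D}_\mu X(t)) \cdot {\cal D}_\mu Z(t) \right) dt \right] = 0.
\end{equation*}
Em terceiro lugar, uso o facto de $X$ ser ponto estacionário: pela equação de Euler--Lagrange estocástica \eqref{eq:EELEST}, $\partial_x L(X(t),{\cal D}_\mu X(t)) = {\cal D}_{-\mu}\left[\partial_v L(X(t),{\cal D}_\mu X(t))\right]$. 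Substituo isto na identidade anterior, ficando com
\begin{equation*}
E\left[\int_I \left( {\cal D}_{-\mu}(\partial_v L) \cdot Z(t)
+ \partial_v L \cdot {\cal D}_\mu Z(t) \right) dt \right] = 0,
\end{equation*}
onde $\partial_v L$ é avaliado em $(X(t),{\cal D}_\mu X(t))$ e pertence a ${\cal C}^1(I)$ por $X$ ser $L$-adaptado.

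**Conclusão e obstáculo principal.** Finalmente, aplico a regra de Leibniz estocástica do Corolário~\ref{CORL3} (ou o Teorema~\ref{TEORDP} na sua versão para ${\cal D}_\mu$): como $Z$ é Nelson diferenciável no subespaço variacional relevante, $E\left[{\cal D}_{-\mu}(\partial_v L)\cdot Z + \partial_v L \cdot {\cal D}_\mu Z\right] = \frac{d}{dt}E[\partial_v L \cdot Z]$. Logo o integrando é a derivada total $\frac{d}{dt}E\left[\partial_v L \cdot Z(t)\right]$, e como o integral sobre $I$ se anula para subintervalos arbitrários (a invariância vale para todo $I$, tal como no caso clássico da Definição~\ref{DEFI}), conclui-se $\frac{d}{dt}E\left[\partial_v L \cdot Z\right] = 0$, que é exactamente a tese com $Z = \left.\frac{\partial Y}{\partial s}\right|_{s=0}$. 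O ponto delicado é justificar a permuta entre $\frac{\partial}{\partial s}$ e o sinal de esperança/integral (necessária para obter a identidade de invariância infinitesimal) e verificar que o emparelhamento de índices complexos $\mu/-\mu$ está correcto ao aplicar o Corolário~\ref{CORL3} — é preciso notar que a parte real e a parte imaginária se comportam como no Lema~\ref{lema2}, de modo que a combinação ${\cal D}_{-\mu}(\partial_v L)\cdot Z + \partial_v L\cdot{\cal D}_\mu Z$ é precisamente a que produz a derivada total; essa escolha de sinais conjugados é o análogo estocástico da integração por partes clássica e tem de ser manuseada com cuidado.
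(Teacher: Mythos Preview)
Your overall strategy---differentiate the invariance identity in $s$, substitute the Euler--Lagrange equation, then apply the stochastic Leibniz rule---is exactly the paper's. Two points deserve comment.

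\medskip

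\textbf{Pointwise versus integral.} The paper never passes through the functional $F_I$. Since the invariance in Defini\c{c}\~{a}o~\ref{DefNE4} is already pointwise, $L(\phi_s(X(t)),{\cal D}_\mu(\phi_s X)(t))=L(X(t),{\cal D}_\mu X(t))$ for every $t$ and every $s$, so one differentiates this identity in $s$ directly to obtain
\[
\partial_x L\cdot\frac{\partial Y}{\partial s}
+\partial_v L\cdot {\cal D}_\mu\!\left(\frac{\partial Y}{\partial s}\right)=0
\]
(using Lema~\ref{lemaEN1} for the commutation), then substitutes the Euler--Lagrange relation, takes expectation, and applies Corol\'{a}rio~\ref{CORL3}. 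Your route via $\int_I$ and the ``arbitrary subinterval'' argument is correct but an unnecessary detour: the pointwise invariance you need at the end is precisely what you could have used at the start.

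\medskip

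\textbf{The $\mu$ versus $-\mu$ issue.} This is where you genuinely diverge. You invoke \eqref{eq:EELEST}, the ${\cal C}^1(I)$-critical equation $\partial_x L={\cal D}_{-\mu}\partial_v L$, which lands you with the mixed pairing
\[
E\!\left[{\cal D}_{-\mu}(\partial_v L)\cdot Z+\partial_v L\cdot{\cal D}_\mu Z\right]
\]
and, as you correctly identify, Corol\'{a}rio~\ref{CORL3} does not literally cover this; one has to go back to Teorema~\ref{TEORDP} and Lema~\ref{lema2} and reassemble the real and imaginary parts by hand. The paper sidesteps this entirely: it takes the ${\cal N}^1(I)$-critical equation $\partial_x L={\cal D}_\mu\partial_v L$ (the version stated just after Lema~\ref{lem:34}), so both terms carry the \emph{same} ${\cal D}_\mu$ and Corol\'{a}rio~\ref{CORL3} applies verbatim with $\frac{\partial Y}{\partial s}\big|_{s=0}$ playing the role of the Nelson-differentiable factor. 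Your ``ponto delicado'' is thus an artefact of choosing the ${\cal C}^1$ Euler--Lagrange equation; switching to the ${\cal N}^1$ version removes it and gives the paper's two-line conclusion.
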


\begin{proof}
Seja $Y(s,t)=\phi_s X(t)$, $s \in \mathbb{R}$ e $t \in [a, b]$.
Se $L$ \'{e} invariante sob $\Phi=\{\phi_s\}_{s \in \mathbb{R}}$, ent\~{a}o
\begin{equation*}
\frac{\partial}{\partial s}L(Y(s,t), {\cal D}_\mu Y(s,t))=0
\end{equation*}
com $Y(\cdot,t)$ e ${\cal D}_\mu Y(\cdot,t) \in C^1(\mathbb{R})$ $\forall$ $t \in [a,b]$.
Temos, por conseguinte,
\begin{equation*}
\partial_x L \cdot \frac{\partial Y}{\partial s}
+\partial_v L \cdot \frac{\partial {\cal D}_\mu Y}{\partial s}=0
\end{equation*}
que \'{e} equivalente a
\begin{equation*}
\partial_x L \cdot \frac{\partial Y}{\partial s}
+\partial_v L \cdot {\cal D}_\mu\left(\frac{\partial  Y}{\partial s}\right)=0
\end{equation*}
com $X=Y|_{s=0}$ um processo estacion\'{a}rio para $F_I$. Resulta ent\~{a}o que
$\partial_x L={\cal D}_{\mu} \partial_v L$. Como consequ\^{e}ncia,
\begin{equation*}
[{\cal D}_\mu \partial_v L] \cdot \frac{\partial Y}{\partial s}
+\partial_v L \cdot {\cal D}_\mu\left(\frac{\partial  Y}{\partial s}\right)=0
\end{equation*}
e
\begin{equation*}
E\left[[{\cal D}_\mu \partial_v L] \cdot \frac{\partial Y}{\partial s}
+\partial_v L \cdot {\cal D}_\mu\left(\frac{\partial  Y}{\partial s}\right)\right]=0.
\end{equation*}
Usando a regra do produto \eqref{eq:CORL},
\begin{equation*}
\frac{d}{dt}E \left[\partial_v L\cdot \left.\frac{\partial Y}{\partial s}\right|_{s=0}\right]=0.
\end{equation*}
\end{proof}


\section{Conclus\~{a}o}
\label{sec:conc}

Ao longo deste trabalho estud\'{a}mos problemas estoc\'{a}sticos
do c\'{a}lculo das varia\c{c}\~{o}es e, em particular, obtivemos
uma formula\c{c}\~{a}o estoc\'{a}stica do Teorema de Noether.
Generaliza\c{c}\~{o}es no contexto do controlo \'{o}timo
podem ser encontradas na disserta\c{c}\~{a}o de mestrado
do primeiro autor \cite{MSc:AB}.

O nosso estudo dos problemas estoc\'{a}sticos do c\'{a}lculo das varia\c{c}\~{o}es
e do Teorema de Noether estoc\'{a}stico, foi realizado
tendo em conta a abordagem proposta por Cresson e
Darses em \cite{CD:CressonDarses}.
Outras abordagens s\~{a}o poss\'{\i}veis, como seja a de
Zambrini \cite{CD:Zamb1}, que em 1980 apresentou
duas extens\~{o}es do teorema fundamental do c\'{a}lculo das varia\c{c}\~{o}es estoc\'{a}stico,
importantes para problemas da f\'{\i}sica que envolvem restri\c{c}\~{o}es \cite{CD:Zamb3,CD:Zamb2}.
L\'azaro-Cam\'\i\  e Ortega, utilizando ferramentas
da an\'{a}lise global estoc\'{a}stica introduzidas por Meyer e Schwartz,
obtiveram uma generaliza\c{c}\~{a}o estoc\'{a}stica das equa\c{c}\~{o}es de Hamilton \cite{CD:Ortega}.
Recentemente, num artigo de Cresson e Darses \cite{CD:CressonD1},
foi demonstrado que as equa\c{c}\~{o}es de Navier--Stokes admitem uma
estrutura Lagrangiana com a incorpora\c{c}\~{a}o de sistemas estoc\'{a}sticos Lagrangianos.
Estas equa\c{c}\~{o}es coincidem com as equa\c{c}\~{o}es de Euler--Lagrange
de uma funcional variacional estoc\'{a}stica, ou seja,
s\~{a}o sistemas de Euler--Lagrange estoc\'{a}sticos.

O Teorema de Noether estoc\'{a}stico \'{e} apresentado neste trabalho apenas para o caso aut\'{o}nomo.
Estabelecer um teorema de Noether estoc\'{a}stico para o caso n\~{a}o aut\'{o}nomo,
com mudan\c{c}a da vari\'{a}vel independente $t$, \'{e} um problema em aberto.



\end{document}